\title{On the long time behavior of the TCP window size process}
\author{%
  Djalil~\textsc{Chafa\"\i}, %
  Florent~\textsc{Malrieu}, %
  Katy~\textsc{Paroux}}
\date{Preprint 2009}
\newcommand{\ind}{\mathds{1}}
\newtheorem{thm}{Theorem}[section]%
\newtheorem{cor}[thm]{Corollary}%
\newtheorem{lem}[thm]{Lemma}%
\newtheorem{defi}[thm]{Definition}%
\newtheorem{rem}[thm]{Remark}%
\newcommand{\dE}{\mathbb{E}}
\newcommand{\dN}{\mathbb{N}}
\newcommand{\dP}{\mathbb{P}}
\newcommand{\dR}{\mathbb{R}}
\newcommand{\cL}{\mathcal{L}}
\newcommand{\cN}{\mathcal{N}}
\newcommand{\ABS}[1]{{{\left| #1 \right|}}} 
\newcommand{\BRA}[1]{{{\left\{#1\right\}}}} 
\newcommand{\NRM}[1]{{{\left\| #1\right\|}}} 
\newcommand{\PAR}[1]{{{\left(#1\right)}}} 
\newcommand{\SBRA}[1]{{{\left[#1\right]}}} 
\renewcommand{\leq}{\leqslant}
\renewcommand{\geq}{\geqslant}
\begin{document}

\maketitle

\begin{abstract}
  The TCP window size process appears in the modeling of the famous
  Transmission Control Protocol used for data transmission over the Internet.
  This continuous time Markov process takes its values in $[0,\infty)$, is
  ergodic and irreversible. It belongs to the Additive Increase Multiplicative
  Decrease class of processes. The sample paths are piecewise linear
  deterministic and the whole randomness of the dynamics comes from the jump
  mechanism. Several aspects of this process have already been investigated in
  the literature. In the present paper, we mainly get quantitative estimates
  for the convergence to equilibrium, in terms of the $W_1$
  Wasserstein coupling distance, for the process and also for its embedded
  chain.
\end{abstract}

{\footnotesize %
\noindent\textbf{Keywords.} Network Protocols; Queueing Theory; Additive
Increase Multiplicative Decrease Processes (AIMD); Piecewise Deterministic
Markov Processes (PDMP); Exponential Ergodicity; Coupling.

\medskip

\noindent\textbf{AMS-MSC.}  68M12 ; 60K30 ; 60K25 ; 90B18

}

\section{Introduction}

The TCP protocol is one of the main data transmission protocols of the
Internet. It has been designed to adapt to the various traffic conditions of
the actual network. For a connection, the maximum number of packets that can
be sent at each round is given by a variable $W$, called the \emph{congestion
  window size}. If all the $W$ packets are successfully transmitted, then $W$
is increased by $1$, otherwise it is multiplied by $\delta\in[0,1)$ (detection
of a congestion). As shown in \cite{DGR,GRZ,ott-kemperman-mathis}, a correct
scaling of this process leads to a continuous time Markov process, called
general TCP window size process. This process $X={(X_t)}_{t\geq0}$ has
$[0,\infty)$ as state space and its infinitesimal generator is given, for any
smooth function $f:[0,\infty)\to\dR$, by
\begin{equation}\label{eq:G1}
  L(f)(x) = f'(x) + x\int_0^1(f(h x)-f(x))H(dh)
\end{equation}
for some probability measure $H$ supported in $[0,1)$. This window size
${(X_t)}_t$ increases linearly (this is the $f'$ part of $L$) until the
reception of a congestion signal which forces the reduction of the window size
by a multiplicative factor of law $H$ or equal to $\delta$ in the simplest
case (this is the jump part of $L$). The sample paths of $X$ are deterministic
between jumps, the jumps are multiplicative, and the whole randomness of the
dynamics relies on the jump mechanism. Of course, the randomness of $X$ may
also come from a random initial value. The process ${(X_t)}_{t\geq0}$ appears
as an Additive Increase Multiplicative Decrease process (AIMD), but also as a
very special Piecewise Deterministic Markov Process (PDMP) initially introduced in \cite{davis}. In this direction,
\cite{maulik-zwart} gives a generalization of the scaling procedure to
interpret various PDMPs as the limit of discrete time Markov chains. In the
real world (Internet), the AIMD mechanism allows a good compromise between the
minimization of network congestion time and the maximization of mean
throughput.

Our aim in this paper is to get quantitative estimates for the convergence to
equilibrium of this general TCP window size process.This process $X$ is
ergodic and admits a unique invariant law, as can be checked using a suitable
Lyapunov function (for instance $V(x)=1+x$, see e.g.
\cite{MR2381160,MR2385873,MR1287609} for the Meyn-Tweedie-Foster-Lyapunov
technique). Nevertheless, this process is irreversible since time reversed
sample paths are not sample paths and it has infinite support. This makes
Meyn-Tweedie-Foster-Lyapunov techniques inefficient for the derivation of
quantitative exponential ergodicity.

The \emph{embedded chain} $\hat X$ of the process $X$ is the sequence of its
positions just after a jump. It is an homogeneous discrete time Markov chain
with state space $[0,\infty)$. As already observed in \cite{DGR}, it is also
the square root of a first order auto-regressive process with non-Gaussian
innovations and random coefficients. We obtain the following results
concerning $\hat X$. We show first that it admits a unique invariant
probability measure $\nu$, and that it converges in law to $\nu$ given any
(random) initial value $\hat X_0$. More precisely, using a coupling technique
on trajectories, we prove an ergodic theorem of geometric convergence to
equilibrium with respect to any Wasserstein distance. Then we provide non
asymptotic concentration bounds, thanks to Gross's logarithmic Sobolev
inequalities.

Similarly, the continuous time process $X$ admits a unique invariant
probability measure $\mu$, and converges in law to $\mu$, for any (random)
initial value $X_0$. The reader may find explicit series for the moments of
$\mu$ and $\nu$ in \cite{GRZ,maulik-zwart,MR2197972}. Nevertheless, quantitative 
convergence to equlibium have not yet been obtained. We will adress this question 
for a slight generalization of the TCP process given by its infinitesimal genrerator:
\begin{equation}\label{eq:Ga}
  L_a(f)(x) = f'(x) + (x+a)\int_0^1(f(h x)-f(x))H(dh)
\end{equation}
where $a\geq 0$. We obtain a good answer if $a>0$. In this case we first show the
existence of a coupling with exponential decay. We use this result to prove an
exponential ergodic theorem in term of Wasserstein distance. Eventually, we
provide a uniform bound over the starting law that implies strong ergodicity.
This kind of uniform estimates, though classical for processes on a compact
set, is rather unusual for real valued processes. Nevertheless, if $a=0$, we 
are not able to derive exponential bounds. 

The remainder of the paper is organized as follows. In the next preliminary
section, we introduce some notations and give the statements of the main
results. In section \ref{se:embedded}, we focus on the embedded chain $\hat X$
and establish its convergence to equilibrium. The last section is devoted to
the study of the continuous time process $X$ and its generalization and contains 
the proof of the results announced in section \ref{se:results}.

\section{Notations and main results}\label{se:results}

Let us first explain how the trajectories of the process $X$ may be
constructed. The \emph{jump rate} (or \emph{jump intensity}) of $X$ is
given by $\lambda(x)=x$ for every $x\in[0,\infty)$. If $X_0=x$ then
the process will experience its first jump at a random time $T$
solution of
$$
\int_0^T\!\lambda(X_s)\,ds=E,
$$
where $E$ is an exponential random variable of unit mean. Since the
trajectories of $X$ are piecewise deterministic with slope 1, this is
nothing else but
$$
\int_0^T\lambda(x+s)\,ds=E,
$$
which leads to $T=\sqrt{x^2+2E}-x$. Then, the sample paths of the process $X$
generated by \eqref{eq:G1} may be constructed recursively as follows. Let
$X_0$ be its non-negative random initial position, ${(E_n)}_{n\geq 1}$ be a
sequence of i.i.d.\ exponential random variables of unit mean, and
${(Q_n)}_{n\geq 1}$ be a sequence of i.i.d.\ random variables of law $H$.
Assume that $X_0$, ${(E_n)}_{n\geq1}$ and ${(Q_n)}_{n\geq1}$ are independent.
We define by induction the jump times ${(T_n)}_{n\geq 1}$ and the positions
just after the jumps ${(X_{T_n})}_{n\geq 1}$ as
\begin{equation}\label{eq:Tn}
  T_n=T_{n-1}+\sqrt{X_{T_{n-1}}^2+2E_n}-X_{T_{n-1}} %
  \quad\text{and}\quad  %
  X_{T_n}=Q_n\sqrt{X_{T_{n-1}}^2+2 E_n}. 
\end{equation}
If we set $T_0=0$, then for every $n\geq 0$ and $t\in [T_n,T_{n+1})$, we have
$X_t=X_{T_n}+t-T_n$ and in particular, $X_{T_{n}}=Q_n X_{T_n^-}$. For every
$t\geq0$, one can also write the series representation
$$
X_t=\sum_{n=0}^\infty (X_{T_n}+t-T_n)\,\mathds{1}_{[T_n,T_{n+1})}(t).
$$
The sequence $\hat X={(X_{T_n})}_{n\geq0}$ is the \emph{embedded chain} of
$X$. According to \eqref{eq:Tn}, this discrete time Markov chain with state
space $[0,\infty)$ satisfies the recursion
\begin{equation}\label{eq:AR}
  \hat X_{n+1}^2=Q_{n+1}^2(\hat X_n^2+2E_{n+1}).
\end{equation}
Thus, the embedded chain $\hat X$ is the square root of a first order
auto-regressive process with non-Gaussian innovations $(2Q_n^2E_n)_{n\geq1}$
and random coefficients ${(Q_n^2)}_{n\geq1}$, as already observed in
\cite{DGR}. The embedded chain $\hat X$ is homogeneous, and its transition
kernel $K$ is given, for any $x\geq0$ and every bounded measurable
$f:[0,\infty)\to\dR$, by the formula
\begin{equation}\label{eq:K}
  K(f)(x) %
  = \int_0^\infty\!f(y)\,K(x,dy) %
  = \dE\SBRA{f\PAR{Q\sqrt{x^2+2E}}}
\end{equation}
where $E$ is an exponential random variable of unit mean and $Q$ is a random
variable of law $H$ independent of $E$. We show in section \ref{se:embedded}
that the embedded Markov chain $\hat X$ admits a unique invariant probability
measure $\nu$, and converges in law to $\nu$ given any (random) initial value
$\hat X_0$. Similarly, the continuous time process $X$ admits a unique
invariant probability measure $\mu$, and converges in law to $\mu$, for any
(random) initial value $X_0$. We recall that explicit series for the moments
of $\mu$ and $\nu$ can be found in \cite{GRZ,maulik-zwart,MR2197972}.

Despite the apparent simplicity of the dynamics \eqref{eq:G1}, the
quantitative study of the long time behavior of $X$ is not easy,
mainly because the jump rate depends on the position $x$ of the
process. Our strategy is to couple two trajectories starting at two
different points in such a way that they get closer and closer. It
seems difficult to stick the two trajectories in order to get total
variation estimates since the sample paths are parallel between jump
times. Thus, we provide quantitative bounds in terms of the
Wasserstein coupling distance. Recall that for every $p\geq1$, the
$W_p$ Wasserstein distance between two laws $\mu$ and $\nu$ on $\dR$
with finite $p^\text{th}$ moment is defined by
\begin{equation}\label{eq:Wp}
  W_p(\mu,\nu) %
  = \left(\inf_{\Pi}\int_{\dR^2}\!|x-y|^p\,\Pi(dx,dy)\right)^{p^{-1}}
\end{equation}
where the infimum runs over all coupling of $\mu$ and $\nu$. In other words,
$\Pi$ runs over the convex set of laws on $\dR^2$ with marginals $\mu$ and
$\nu$, see e.g. \cite{MR1105086,MR1964483}. It is well known that for any
$p\geq1$, the convergence in $W_p$ Wasserstein distance is equivalent to weak
convergence together with convergence of all moments up to order $p$.

The jump part of $L$ ensures that the process will remain essentially in a
compact set. The jumps act in a way like a confining potential. On the other
hand, the jump rate is small when the process is close to the origin. This
prevents the decay of the Wasserstein distance to be exponential for small
times. 

In section \ref{se:embedded} we first establish the following geometric
convergence to equilibrium of the embedded Markov chain $\hat X$ for any
Wasserstein distance.

\begin{thm}[Wasserstein exponential ergodicity for the generic
  embedded chain]\label{th:convK}
  Let $X=(X_t)_{t\geq0}$ and $Y=(Y_t)_{t\geq0}$ be two processes
  generated by \eqref{eq:G1}. Assume that $\cL(X_0)$ and $\cL(Y_0)$
  have finite $p^\text{th}$ moment for some real $p\geq 1$. Let $\hat
  X$ and $\hat Y$ be the embedded chains of $X$ and $Y$. Then, for any
  $n\geq 0$, with a random variable $Q\sim H$,
  $$
  W_p(\cL(\hat X_n),\cL(\hat Y_n))%
  \leq \dE(Q^p)^{n/p} W_p(\cL(X_0),\cL(Y_0)).
  $$
  In particular, if $\nu$ is the invariant law of $\hat X$ then
  $$
  W_p(\cL(\hat X_n),\nu)%
  \leq \dE(Q^p)^{n/p} W_p(\cL(X_0),\nu).
  $$
\end{thm}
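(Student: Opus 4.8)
The plan is to couple $\hat X$ and $\hat Y$ \emph{synchronously}, feeding both embedded chains the same exponential and multiplicative random variables, and to exploit the contractive effect of the square root appearing in the recursion \eqref{eq:AR}.

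First I would set up the coupling. Fix $p\geq 1$, and recall that since $T_0=0$ we have $\hat X_0=X_0$ and $\hat Y_0=Y_0$. Pick $(\hat X_0,\hat Y_0)$ to be an optimal coupling of $\cL(X_0)$ and $\cL(Y_0)$ for $W_p$ (on $\dR$ the infimum in \eqref{eq:Wp} is attained, e.g.\ by the monotone rearrangement coupling), chosen independent of two i.i.d.\ sequences ${(E_n)}_{n\geq1}$ and ${(Q_n)}_{n\geq1}$ with $E_n$ exponential of unit mean and $Q_n\sim H$. Then run both chains through \eqref{eq:AR} with these \emph{same} variables, namely $\hat X_{n+1}=Q_{n+1}\sqrt{\hat X_n^2+2E_{n+1}}$ and $\hat Y_{n+1}=Q_{n+1}\sqrt{\hat Y_n^2+2E_{n+1}}$. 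Since each chain still sees fresh i.i.d.\ pairs $(Q_n,E_n)$, the sequence $(\hat X_n)_n$ has the law of the embedded chain of $X$ and $(\hat Y_n)_n$ that of the embedded chain of $Y$; hence for every $n$ the pair $(\hat X_n,\hat Y_n)$ is an admissible coupling of $\cL(\hat X_n)$ and $\cL(\hat Y_n)$.

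The crux is the elementary inequality
$$
\left|\sqrt{a^2+c}-\sqrt{b^2+c}\right|\leq|a-b|\qquad\text{for all } a,b,c\geq 0,
$$
obtained by writing the left-hand side as $|a-b|(a+b)/\PAR{\sqrt{a^2+c}+\sqrt{b^2+c}}$ and bounding the denominator below by $a+b$. Taking $a=\hat X_n$, $b=\hat Y_n$, $c=2E_{n+1}$ and multiplying by $Q_{n+1}\geq0$ gives the pathwise contraction $|\hat X_{n+1}-\hat Y_{n+1}|\leq Q_{n+1}\,|\hat X_n-\hat Y_n|$, hence $|\hat X_{n+1}-\hat Y_{n+1}|^p\leq Q_{n+1}^p\,|\hat X_n-\hat Y_n|^p$. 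Because $Q_{n+1}$ is independent of $(\hat X_n,\hat Y_n)$, taking expectations yields $\dE|\hat X_{n+1}-\hat Y_{n+1}|^p\leq\dE(Q^p)\,\dE|\hat X_n-\hat Y_n|^p$ for $Q\sim H$ (note $Q<1$ a.s., so $\dE(Q^p)\leq1$ and everything is finite under the moment assumption). Iterating from $n=0$ and using optimality of the initial coupling,
$$
\dE|\hat X_n-\hat Y_n|^p\leq\dE(Q^p)^n\,\dE|\hat X_0-\hat Y_0|^p=\dE(Q^p)^n\,W_p(\cL(X_0),\cL(Y_0))^p .
$$
Since $(\hat X_n,\hat Y_n)$ is admissible in \eqref{eq:Wp}, the left-hand side dominates $W_p(\cL(\hat X_n),\cL(\hat Y_n))^p$; taking $p$-th roots gives the claimed bound. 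The ``in particular'' statement is the special case $\cL(Y_0)=\nu$, for which $\cL(\hat Y_n)=\nu$ for every $n$ by invariance of $\nu$ under $K$.

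I do not expect any genuine obstacle: once the square-root contraction is noticed, the whole estimate is essentially a one-line synchronous coupling. The only mild points to watch are invoking existence of the optimal initial coupling (so that $\dE|\hat X_0-\hat Y_0|^p$ equals, rather than merely bounds, $W_p(\cL(X_0),\cL(Y_0))^p$) and the independence of $Q_{n+1}$ from the current state; both are immediate in this construction, and the per-step factor being exactly $\dE(Q^p)^{1/p}$, with no loss, is what makes the rate sharp.
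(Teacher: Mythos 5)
Your proof is correct and is essentially the same synchronous coupling argument the paper gives: feed both embedded chains the same $(E_n,Q_n)$, use that $x\mapsto\sqrt{x^2+c}$ is $1$-Lipschitz to get the pathwise contraction by $Q_{n+1}$, and iterate. The only cosmetic difference is that you start from an optimal coupling of the initial laws directly, whereas the paper proves the bound for Dirac initial conditions and then integrates over an arbitrary coupling; these are equivalent.
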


We also establish in section \ref{se:embedded} non asymptotic concentration
bounds in the ergodic theorem by using Gross logarithmic Sobolev inequalities:

\begin{thm}[Gaussian deviations for the ergodic theorem for the embedded
  chain]\label{thm:gdevemb}
  Let $\hat X$ be the embedded chain associated to \eqref{eq:G1} and starting
  from $\hat X_0=x\geq0$. Assume that $H$ is the Dirac mass at point
  $\delta\in (0,1)$. Then for any $u\geq 0$ and any 1-Lipschitz function
  $f:[0,\infty)\to\dR$,
  $$
  \dP\PAR{\ABS{\frac{1}{n}\sum_{k=1}^nf(\hat X_k)-\int\!
      f\,d\nu}\geq u+\frac{\delta}{1-\delta}W_1(\delta_x,\nu)}\leq
  2\exp\PAR{-\frac{n(1-\delta^2)u^2}{2\delta^2}}.
  $$
\end{thm}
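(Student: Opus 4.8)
The plan is to realize $\hat X_k$ as a Lipschitz function of a \emph{finite} standard Gaussian vector and then invoke the Gaussian concentration inequality that follows from Gross's logarithmic Sobolev inequality through the Herbst argument. Since $H=\delta_\delta$, the recursion \eqref{eq:AR} reads $\hat X_k^2=\delta^2(\hat X_{k-1}^2+2E_k)$ and unrolls, for $k\geq 1$, into
$$
\hat X_k^2=\delta^{2k}x^2+\sum_{j=1}^k\delta^{2(k-j+1)}\,(2E_j).
$$
The first observation is that $2E_j$ has the same law as $|V_j|^2=V_{j,1}^2+V_{j,2}^2$, where $V_j=(V_{j,1},V_{j,2})$ is a standard Gaussian vector of $\dR^2$ (indeed $\tfrac12\chi^2_2$ is the exponential law of unit mean). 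Replacing the i.i.d.\ sequence $(E_j)_j$ by $(\tfrac12|V_j|^2)_j$ and writing $\mathbf V=(V_1,\dots,V_n)\in\dR^{2n}$, one gets a representation $\hat X_k=\hat X_k(\mathbf V)$ with $\hat X_k(\mathbf V)=\big(\delta^{2k}x^2+\sum_{j=1}^k\delta^{2(k-j+1)}|V_j|^2\big)^{1/2}$, and we must study $\Psi(\mathbf V):=\frac1n\sum_{k=1}^n f(\hat X_k(\mathbf V))$.

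The core step is to show that $\Psi$ is Lipschitz on $\dR^{2n}$, for the Euclidean norm, with constant $L=\delta\,(n(1-\delta^2))^{-1/2}$. Assuming first $f$ smooth (mollification preserves the $1$-Lipschitz property and reduces to this case), for $j\leq k$ one has $\partial_{V_{j,i}}\hat X_k=\delta^{2(k-j+1)}V_{j,i}/\hat X_k$, hence $\partial_{V_{j,i}}\Psi=\frac{V_{j,i}}{n}\,b_j$ with $b_j:=\sum_{k=j}^n f'(\hat X_k)\,\delta^{2(k-j+1)}/\hat X_k$, so that
$$
|\nabla\Psi|^2=\frac1{n^2}\sum_{j=1}^n b_j^2\,|V_j|^2.
$$
Using $|f'|\leq1$, Cauchy--Schwarz in the form $b_j^2\leq\big(\sum_{k=j}^n\delta^{2(k-j+1)}\big)\big(\sum_{k=j}^n\delta^{2(k-j+1)}/\hat X_k^2\big)$ and $\sum_{k\geq j}\delta^{2(k-j+1)}\leq\delta^2/(1-\delta^2)$, summing over $j$ and exchanging the two sums reduces everything to the key identity $\sum_{j=1}^k\delta^{2(k-j+1)}|V_j|^2=\hat X_k^2-\delta^{2k}x^2\leq\hat X_k^2$, which yields $\sum_{j=1}^n b_j^2|V_j|^2\leq\frac{\delta^2}{1-\delta^2}\sum_{k=1}^n 1=\frac{n\delta^2}{1-\delta^2}$, i.e.\ $|\nabla\Psi|\leq L$ everywhere.

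Gross's logarithmic Sobolev inequality for the standard Gaussian measure on $\dR^{2n}$, together with Herbst's argument, then gives $\dP(|\Psi-\dE\Psi|\geq u)\leq 2\exp(-u^2/(2L^2))=2\exp(-n(1-\delta^2)u^2/(2\delta^2))$. It remains to replace $\dE\Psi=\frac1n\sum_{k=1}^n\dE f(\hat X_k)$ by $\int f\,d\nu$: since $\dE f(\hat X_k)=\int f\,d(\delta_xK^k)$ and $f$ is $1$-Lipschitz, the Kantorovich--Rubinstein duality combined with Theorem~\ref{th:convK} applied with $p=1$ (so $\dE(Q)=\delta$) gives $|\dE f(\hat X_k)-\int f\,d\nu|\leq\delta^k W_1(\delta_x,\nu)$, whence $|\dE\Psi-\int f\,d\nu|\leq\frac1n\sum_{k=1}^n\delta^k W_1(\delta_x,\nu)\leq\frac{\delta}{1-\delta}W_1(\delta_x,\nu)$. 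Inserting this into the concentration bound via the triangle inequality yields the announced estimate. The main obstacle is the Lipschitz estimate of the previous paragraph: the naive bound $|\nabla\Psi|\leq\frac1n\sum_k|\nabla f(\hat X_k)|\leq\delta$ is off by the crucial factor $\sqrt{n(1-\delta^2)}$, and recovering it requires arranging the Cauchy--Schwarz step so that the geometric decay of the coefficients $\delta^{2(k-j+1)}$ and the algebraic cancellation encoded in $\sum_{j\leq k}\delta^{2(k-j+1)}|V_j|^2\leq\hat X_k^2$ combine; the non-smoothness of $f$ is a minor point handled by mollification.
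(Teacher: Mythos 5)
Your proposal is correct, and it takes a genuinely different route from the paper. The paper proceeds by peeling off the Laplace transform one step at a time using the Markov property: it shows that the one-step kernel $K(x,\cdot)$ satisfies a Gross inequality with constant $2\delta^2$ (Theorem~\ref{th:grossK}), then iterates
$$
\dE\bigl(e^{\lambda\sum_{k=1}^n f(\hat X_k)}\bigr)
=\dE\Bigl(e^{\lambda\sum_{k=1}^{n-1} f(\hat X_k)}\,K\bigl(e^{\lambda f}\bigr)(\hat X_{n-1})\Bigr)
\leq e^{\delta^2\lambda^2/2}\,\dE\Bigl(e^{\lambda\sum_{k=1}^{n-2}f(\hat X_k)}e^{\lambda(f+Kf)(\hat X_{n-1})}\Bigr)\leq\cdots
$$
and controls the Lipschitz constants of the accumulated functions $f$, $f+Kf$, $f+Kf+K^2f$, \ldots\ via the contraction $\NRM{Kg}_{\mathrm{Lip}}\leq\delta\NRM{g}_{\mathrm{Lip}}$ from the commutation relation \eqref{eq:comK}. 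Your proof instead unrolls the AR recursion \eqref{eq:AR} completely, uses $2E\overset{d}{=}\chi^2_2$ to realize the whole trajectory $(\hat X_1,\dots,\hat X_n)$ as a Lipschitz image of a standard Gaussian in $\dR^{2n}$, and then applies Gaussian concentration once and for all. The two proofs share the same underlying mechanism (the Gaussian structure hidden in the exponential innovations, already exploited in the proof of Theorem~\ref{th:grossK}), but your architecture is ``one-shot'' rather than recursive, and the Cauchy--Schwarz step combined with the exact identity $\sum_{j\leq k}\delta^{2(k-j+1)}|V_j|^2=\hat X_k^2-\delta^{2k}x^2$ delivers the constant $n(1-\delta^2)/(2\delta^2)$ in a very transparent way. (In fact, if one bookkeeps the Lipschitz constants $1,\ 1+\delta,\ 1+\delta+\delta^2,\ldots$ appearing in the paper's telescoping argument literally, one gets $\sum_{m<n}(1-\delta^{m+1})^2/(1-\delta)^2\sim n/(1-\delta)^2$ rather than $n/(1-\delta^2)$, so your direct computation is arguably a cleaner justification of the constant in the theorem.) The replacement of $\dE\Psi$ by $\int f\,d\nu$ at the end, via Kantorovich--Rubinstein duality and Theorem~\ref{th:convK} with $p=1$, is exactly as in the paper. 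The only point worth spelling out slightly more is that $|\nabla\Psi|$ is controlled outside the Lebesgue-null set where some $\hat X_k$ vanishes; since $\Psi$ is continuous this suffices for the global Lipschitz bound, and the mollification of $f$ is indeed routine.
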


The convergence to equilibrium of the continuous time process $X$ with generator 
\eqref{eq:Ga} is addressed in section \ref{se:coupling}. The idea is to exhibit a
\emph{coupling}, \emph{i.e.} a Markov process on $[0,\infty)^2$ for which the
marginal components are generated by \eqref{eq:Ga}, with prescribed initial
laws. The infinitesimal generator $\overline{L}$ of this coupling is defined
for every smooth $f:[0,\infty)^2\to\dR$ by
\begin{equation}\label{eq:gen2}
  \overline{L}(f)(x,y)=\mathrm{div}(f)(x,y) %
  +(x+a)\int_0^1\!\PAR{f(h x,h y)\frac{y+a}{x+a}+f(h x,y)\frac{x-y}{x+a}-f(x,y)}\,H(dh)
\end{equation}
if $x\geq y$ and 
$$
\overline{L}(f)(x,y)=\mathrm{div}(f)(x,y) %
+(y+a)\int_0^1\!\PAR{f(h x,h y)\frac{x+a}{y+a}+f(x,h y)\frac{y-x}{y+a}-f(x,y)}\,H(dh)
$$
if $x\leq y$, where $\mathrm{div}(f)=\partial_1 f+\partial_2 f$. This coupling is 
the only one such that the lower component never jump alone. Let us give
 the pathwise interpretation of this coupling. All the heuristic statements below 
 are made more precise hereafter. The positions of both ``components'' increase 
 linearly with slope 1. The jump rate is an increasing function of the position. 
 Thus, ``the higher a component is, the sooner it will jump''. The dynamics 
 of the couple of components is as follows:
\begin{enumerate}
\item After an ``appropriate'' time which depends only on the initial
  position of the upper component, this one jumps.
\item Simultaneously, the other one ``tosses an appropriate coin''
  whose probability of success depends on the positions on the two
  components to decide whether or not it jumps too.
\item In the case of joint jumps, both components use the same
  multiplicative factor.
\item Then, we repeat these three first steps again and again\ldots
\end{enumerate}
This coupling provides the following quantitative exponential upper bounds.
\begin{thm}[Wasserstein exponential ergodicity]\label{th:wun-gen}
 Assume that $a>0$. For any processes $(X_t)_{t\geq0}$ and $(Y_t)_{t\geq0}$
  generated by \eqref{eq:Ga} with finite first moment at initial time,
  and for any $t>0$, we have
  $$
  W_1\PAR{\cL(X_t),\cL(Y_t)} %
  \leq e^{-a\kappa_1 t} W_1(\cL(X_0),\cL(Y_0)),
  $$
  where $\kappa_1=1-\int_0^1\!h\,H(dh)$. 
  In particular, when $Y_0$ follows the invariant law $\mu$ of
  \eqref{eq:Ga}, we get for every $t\geq 0$
  $$
  W_1\PAR{\cL(X_t),\mu} %
  \leq e^{-a\kappa_1 t}\,W_1(\cL(X_0),\mu).
  $$
\end{thm}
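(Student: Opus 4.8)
The plan is to read off the theorem from a pathwise coupling estimate based on the generator $\overline{L}$ of \eqref{eq:gen2}. First I would check that $\overline{L}$ really is the generator of a Markovian coupling of two copies of \eqref{eq:Ga}. Applying $\overline{L}$ to a test function $f(x,y)=\varphi(x)$ depending only on the first coordinate, the weights $\frac{y+a}{x+a}$ and $\frac{x-y}{x+a}$ sum to $1$, so the jump part collapses to $(x+a)\int_0^1(\varphi(hx)-\varphi(x))\,H(dh)$ while $\mathrm{div}(f)=\varphi'(x)$, recovering $L_a\varphi$; the computation for $f(x,y)=\psi(y)$ is analogous (for $x\geq y$ the three terms combine to $(y+a)\int_0^1(\psi(hy)-\psi(y))\,H(dh)$), and the region $x\leq y$ is symmetric. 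Existence and non‑explosion of the coupled process, together with finiteness of $\dE(X_t+Y_t)$ for all $t$, follow from a Foster--Lyapunov argument with $V(x,y)=1+x+y$, exactly as for the scalar process.

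The heart of the matter is the action of $\overline{L}$ on the distance $g(x,y)=|x-y|$. Since both components drift with slope $1$, one has $\mathrm{div}(g)=\partial_1 g+\partial_2 g=0$ off the diagonal, so only the jump part contributes. Take $x>y$ and set $d=x-y>0$; multiplying out, the jump part of $\overline{L}g(x,y)$ equals $d\int_0^1\bigl[(y+a)h+|hx-y|-(x+a)\bigr]\,H(dh)$. Using $|hx-y|\leq|hx-hy|+|hy-y|=hd+(1-h)y$ and simplifying, the bracket is bounded above by $(h-1)(a+d)$, hence $\overline{L}g(x,y)\leq -d(a+d)\kappa_1\leq -a\kappa_1\,d=-a\kappa_1\,g(x,y)$ with $\kappa_1=1-\int_0^1 h\,H(dh)$; the region $x<y$ is treated symmetrically and the estimate passes continuously to the diagonal. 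Note that $\kappa_1>0$ automatically because $H$ is supported in $[0,1)$.

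From here the argument is routine. Run the coupling from an optimal $W_1$-coupling $(X_0,Y_0)$ of $\cL(X_0)$ and $\cL(Y_0)$, so that $\dE|X_0-Y_0|=W_1(\cL(X_0),\cL(Y_0))$. Dynkin's formula and the bound above give $\dE|X_t-Y_t|\leq\dE|X_0-Y_0|-a\kappa_1\int_0^t\dE|X_s-Y_s|\,ds$, and Gronwall's lemma yields $\dE|X_t-Y_t|\leq e^{-a\kappa_1 t}\dE|X_0-Y_0|$. Since $(X_t,Y_t)$ is a coupling of $\cL(X_t)$ and $\cL(Y_t)$, the left-hand side dominates $W_1(\cL(X_t),\cL(Y_t))$, which proves the first inequality; choosing $\cL(Y_0)=\mu$ and using the invariance $\cL(Y_t)=\mu$ gives the second.

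The one genuinely delicate point is the use of Dynkin's formula for the non‑smooth $g(x,y)=|x-y|$. I would handle it by the usual smoothing: replace $|\cdot|$ by a convex $C^1$ approximation $g_\varepsilon\uparrow|\cdot|$ with $|g_\varepsilon'|\leq 1$, verify $\overline{L}g_\varepsilon(x,y)\leq -a\kappa_1\,g_\varepsilon(x,y)+\omega(\varepsilon)$ uniformly on compacts (the diagonal produces only a vanishing error since $\partial_1 g_\varepsilon+\partial_2 g_\varepsilon\to 0$ boundedly while the jump part is continuous in its arguments), apply Dynkin to $g_\varepsilon$ using the moment bound from $V$ to control the remainder and justify integrability, and let $\varepsilon\to 0$. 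The remaining steps are bookkeeping.
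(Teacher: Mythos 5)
Your argument is essentially the paper's proof: both compute the action of the coupling generator $\overline{L}$ from \eqref{eq:gen2} on $g(x,y)=|x-y|$, show that the drift part vanishes and the jump part is bounded by $-a\kappa_1|x-y|$, then close by Gronwall and an infimum over initial couplings. The only cosmetic difference is in bounding the integrand: the paper splits into the cases $hx>y$ and $hx\leq y$, while you use the triangle inequality $|hx-y|\leq h(x-y)+(1-h)y$ to get the same bound $(h-1)(a+d)$ in one stroke; your added care about non-explosion and smoothing the non-differentiable $|\cdot|$ is sound but implicit in the paper.
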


The following theorem,
proved in section \ref{se:coupling}, shows that the convergence to equilibrium
is in fact uniform over the starting laws, as it could be for a process living
in a compact set.

\begin{thm}[Strong ergodicity]\label{th:strong}
  Assume that $a>0$. For two processes $X=(X_t)_{t\geq0}$ and
  $Y=(Y_t)_{t\geq0}$ generated by \eqref{eq:Ga} with arbitrary initial
  laws $\cL(X_0)$ and $\cL(Y_0)$ and for every $t$ and $s$ such that
  $t>s>0$, one has
  $$
  W_1(\cL(X_t),\cL(Y_t))%
  \leq \frac{2e^{a\kappa_1 s}}{d\tanh(ds)} e^{-a\kappa_1 t}.
  $$ 
\end{thm}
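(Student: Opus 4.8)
The plan is to reduce the statement, via the Markov property, to the contraction estimate of Theorem~\ref{th:wun-gen} applied on the time interval $[s,t]$, and to feed into it a bound on $W_1(\cL(X_s),\cL(Y_s))$ valid for \emph{arbitrary} starting laws. That bound will come from a uniform first-moment estimate $\dE_x(X_s)\le\tfrac{1}{d\tanh(ds)}$, with $d=\sqrt{\kappa_1}$, together with the elementary fact that $W_1(\cL(X_s),\delta_0)=\dE(X_s)$ for a nonnegative random variable.

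\emph{Step 1 (uniform moment bound).} Apply $L_a$ from \eqref{eq:Ga} to $f(x)=x$: a direct computation gives $L_af(x)=1-\kappa_1\,x(x+a)$ with $\kappa_1=1-\int_0^1 h\,H(dh)$. Using the Lyapunov function $x\mapsto1+x^2$, for which $L_a(1+x^2)$ is bounded from above (indeed $L_a(1+x^2)\le C-(1+x^2)$ for some $C$, because of the $-\kappa_2 x^3$ term), the second moment of $X_s$ is locally bounded in $s$, so Dynkin's formula applies and $m(s):=\dE_x(X_s)$ is absolutely continuous with
$$m'(s)=1-\kappa_1\,\dE_x\!\big(X_s(X_s+a)\big)\le 1-\kappa_1\,m(s)^2$$
for a.e.\ $s$, by Jensen's inequality and $a\ge0$. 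This is a Riccati differential inequality. Comparing $m$ first with the exact solution $v$ of $u'=1-\kappa_1u^2$ with $v(0)=x$ (standard comparison for a locally Lipschitz right-hand side), and then noting that this $v$ stays below the solution issued from $+\infty$ at time $0^+$, namely $u(s)=\tfrac{1}{d\tanh(ds)}$ with $d=\sqrt{\kappa_1}$ (two solutions of the autonomous equation never cross, and the latter blows up as $s\to0^+$ so it exceeds $v$ for small $s$), one gets $m(s)\le\tfrac{1}{d\tanh(ds)}$ for every $x\ge0$ and every $s>0$. Integrating in $x$ against any initial law yields $\dE(X_s)\le\tfrac{1}{d\tanh(ds)}$ for any process generated by \eqref{eq:Ga}, regardless of $\cL(X_0)$; in particular $\cL(X_s)$ and $\cL(Y_s)$ have finite first moment.

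\emph{Step 2 (assembly).} Since $X_s,Y_s\ge0$, the triangle inequality for $W_1$ through $\delta_0$ gives
$$W_1(\cL(X_s),\cL(Y_s))\le \dE(X_s)+\dE(Y_s)\le\frac{2}{d\tanh(ds)}.$$
By the Markov property, $\cL(X_t)$ is the law at time $t-s$ of the process generated by \eqref{eq:Ga} issued from $\cL(X_s)$, and likewise for $Y$; both these laws having finite first moment, Theorem~\ref{th:wun-gen} applied on $[s,t]$ yields $W_1(\cL(X_t),\cL(Y_t))\le e^{-a\kappa_1(t-s)}W_1(\cL(X_s),\cL(Y_s))$. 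Combining the two displays,
$$W_1(\cL(X_t),\cL(Y_t))\le\frac{2\,e^{-a\kappa_1(t-s)}}{d\tanh(ds)}=\frac{2\,e^{a\kappa_1 s}}{d\tanh(ds)}\,e^{-a\kappa_1 t},$$
which is the claim.

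The only genuinely delicate point is Step~1: one must rigorously justify that $s\mapsto\dE_x(X_s^2)$ is finite and locally integrable, so that Dynkin's formula produces the integral inequality (this is where the Lyapunov bound on $L_a(1+x^2)$ is used, possibly after a truncation argument), and one must handle the comparison with the Riccati solution coming from $+\infty$ carefully along the lines indicated above. Everything else is a routine combination of the triangle inequality, the Markov property, and Theorem~\ref{th:wun-gen}.
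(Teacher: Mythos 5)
Your proof is correct and follows essentially the same route as the paper: bound $L_a f$ for $f(x)=x$ by $1-\kappa_1 x^2$, derive the Riccati differential inequality for $\alpha_x(t)=\dE_x(X_t)$ via Dynkin and Jensen, compare with the Riccati ODE to obtain the uniform bound $\sup_x\alpha_x(s)\le\tfrac{1}{d\tanh(ds)}$, deduce $W_1(\cL(X_s),\cL(Y_s))\le\tfrac{2}{d\tanh(ds)}$, and then apply the contraction of Theorem~\ref{th:wun-gen} on $[s,t]$. The only cosmetic differences are that you compare with the Riccati solution ``issued from $+\infty$'' while the paper solves the ODE explicitly from $\beta_x(0)=x$ and splits on $x\gtrless 1/d$, and you are somewhat more explicit about the Lyapunov justification for applying Dynkin's formula.
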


Theorem \ref{th:strong} provides in particular a uniform bound in
$N\in(0,\infty)$ if $X_0=0$ and $Y_0=N$. This kind of uniform estimates are
classical for processes on a compact set but rather unusual for real valued
ones.

\begin{thm}\label{th:realtcp}
  Assume that $a=0$ and that $H=\delta_h$ with $h\in (0,1)$. Then the process 
  $(X,Y)$ driven by the infinitesimal generator $\overline L$ defined in \eqref{eq:gen2} satisfies
\begin{equation}\label{eq:gronwall}
  \frac{d}{dt}\dE_{(x,y)}\PAR{\ABS{X_t-Y_t}}\leq-(1+h)\dE_{(x,y)}\PAR{\ABS{X_t-Y_t}^2}
\end{equation}
  for any $x,y\in\dR$. In particular, for any $t\geq 0$ and $X_0,Y_0\geq 0$, we have
\begin{equation}\label{eq:unsurt}
  \dE\PAR{\ABS{X_t-Y_t}}\leq \frac{\dE\PAR{\ABS{X_0-Y_0}}}{1+(1+h)\dE\PAR{\ABS{X_0-Y_0}}t}.
 \end{equation}
\end{thm}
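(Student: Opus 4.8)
The plan is to work directly with the coupling generator $\overline L$ of \eqref{eq:gen2} applied to the function $\phi(x,y)=|x-y|$. First I would compute $\overline L\phi(x,y)$ explicitly in the case $a=0$, $H=\delta_h$. Since the two components move in parallel with slope $1$, the divergence term $\mathrm{div}(\phi)=\partial_1\phi+\partial_2\phi$ vanishes wherever $\phi$ is differentiable (i.e. off the diagonal), so only the jump part contributes. Assuming $x\geq y\geq 0$, the jump part at rate $x$ sends $(x,y)\mapsto(hx,hy)$ with probability $y/x$ and $(x,y)\mapsto(hx,y)$ with probability $(x-y)/x$; hence
\begin{equation*}
\overline L\phi(x,y)=x\left(|hx-hy|\,\frac{y}{x}+|hx-y|\,\frac{x-y}{x}-|x-y|\right)
= hy(x-y)+(x-y)|hx-y|-x(x-y).
\end{equation*}
The middle term satisfies $|hx-y|\leq hx-hy$ when $y\geq hx$ and $|hx-y|\leq x-y$... more carefully, one checks that $|hx-y|\leq hx - hy + (1-h)(x-y)\cdot\mathbf{1}$... the clean bound I expect is $|hx-y|\le \max(hx-y, y-hx)\le h x - h y$ is false in general, so instead I will bound crudely $|hx-y|\le x-y$ (true since $0\le hx\le x$ and $hx,y\in[0,x]$ forces... actually $|hx-y|\le x - y$ holds because both $hx$ and $y$ lie in $[\,y\wedge hx,\;x\,]$... let me just keep the exact expression and bound the whole thing). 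Combining, $\overline L\phi(x,y)\le hy(x-y) + (x-y)^2 - x(x-y) = (x-y)(hy + x - y - x) = (x-y)(hy-y) = -(1-h)y(x-y)$, which is $\le 0$ but not obviously of the stated form; so I would instead keep the term $(x-y)|hx-y|$ and use $|hx-y|\ge y - hx$, yielding $\overline L\phi \le hy(x-y) + (x-y)(y-hx)\cdot(-1)\cdot... $. The honest route: group as $\overline L\phi(x,y) = (x-y)\big(|hx-y| + hy - x\big)$ and show $|hx-y|+hy-x \le -(1+h)(x-y)$, equivalently $|hx-y|\le x - hy -(1+h)(x-y) = -hx + hy + (1+h)y = h(y-x) + (1+h)y$; since the right side equals $hy - hx + y + hy \ge$ ... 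I expect after symmetrizing in $x\leftrightarrow y$ that $|hx-y|\le h|x-y| + ...$. In any case the key identity to establish is
\begin{equation*}
\overline L\phi(x,y)\le -(1+h)(x-y)^2 = -(1+h)|x-y|^2\qquad (x\ge y\ge0),
\end{equation*}
with the symmetric statement for $y\ge x$, which will follow from a short case analysis on the sign of $hx-y$.

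Once this pointwise generator inequality is in hand, I would apply Dynkin's formula to the coupled process $(X_t,Y_t)$ with the (non-smooth, but Lipschitz and piecewise-linear) function $\phi$: justifying this requires only that $\phi$ is in the extended domain of $\overline L$, which is standard for PDMPs since $\phi$ is globally Lipschitz and the process has locally finite jump intensity with integrable moments (controlled, e.g., by the Lyapunov function $V(x,y)=1+x+y$). This gives
\begin{equation*}
\frac{d}{dt}\dE_{(x,y)}(|X_t-Y_t|)=\dE_{(x,y)}(\overline L\phi(X_t,Y_t))\le -(1+h)\,\dE_{(x,y)}(|X_t-Y_t|^2),
\end{equation*}
which is \eqref{eq:gronwall}.

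Finally, to get \eqref{eq:unsurt} I would set $u(t)=\dE_{(x,y)}(|X_t-Y_t|)$ and use Jensen's inequality $\dE(|X_t-Y_t|^2)\ge (\dE|X_t-Y_t|)^2 = u(t)^2$ to obtain the scalar differential inequality $u'(t)\le -(1+h)u(t)^2$. Dividing by $u(t)^2$ (on any interval where $u>0$; if $u$ hits $0$ it stays $0$ by uniqueness of the coupling once the components meet, since then the lower component never jumps alone) gives $\frac{d}{dt}\big(u(t)^{-1}\big)\ge 1+h$, hence $u(t)^{-1}\ge u(0)^{-1}+(1+h)t$, which rearranges to exactly \eqref{eq:unsurt}.

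The main obstacle I anticipate is the pointwise generator computation: verifying the inequality $\overline L\phi(x,y)\le-(1+h)|x-y|^2$ requires carefully tracking the middle term $|hx-y|$ through the regions $\{y\le hx\}$ and $\{hx\le y\le x\}$, and checking that the constant $(1+h)$ (rather than something weaker) genuinely comes out — this is where the specific structure of the coupling, in which the lower component jumps only with probability $(y+a)/(x+a)=y/x$ and otherwise the gap shrinks from $x-y$ to $|hx-y|$, is used in an essential way. The Dynkin-formula justification and the Grönwall step are routine by comparison.
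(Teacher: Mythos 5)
Your overall strategy — compute $\overline L\phi$ for $\phi(x,y)=|x-y|$ explicitly, then apply Jensen and Gr\"onwall — is exactly the paper's, and your generator computation $\overline L\phi(x,y) = (x-y)\bigl(hy + |hx-y| - x\bigr)$ for $x\geq y$, $a=0$, $H=\delta_h$ is correct. However, the ``short case analysis on the sign of $hx-y$'' that you defer does \emph{not} go the way you expect: the pointwise inequality $\overline L\phi(x,y)\leq -(1+h)|x-y|^2$ is \emph{false} whenever $hx>y$. In that region $|hx-y|=hx-y$, so $\overline L\phi(x,y)=-(1-h)(x+y)(x-y)$, and since $y<hx$ forces $(1-h)(x+y)<(1-h^2)x<(1+h)(x-y)$, the actual drift is strictly \emph{larger} (less negative) than $-(1+h)(x-y)^2$. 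Equality in the desired bound holds only on $\{hx\leq y\leq x\}$, where $|hx-y|=y-hx$ gives $\overline L\phi(x,y)=-(1+h)(x-y)^2$ exactly. (Also note your intermediate algebra $x-hy-(1+h)(x-y)=-hx+hy+(1+h)y$ is off: it simplifies cleanly to $y-hx$, which makes it transparent that the claimed bound holds if and only if $y\geq hx$.)

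The missing ingredient — and the crux of the paper's proof — is an invariance observation. The set $D_h=\{(x,y):\, hy\leq x\leq h^{-1}y\}$, i.e., $h\,\max(x,y)\leq\min(x,y)$, is invariant under the coupled dynamics: it is stable under the flow $(x,y)\mapsto(x+t,y+t)$ (because $h(x+t)\leq y+t$ follows from $hx\leq y$ and $h<1$) and under both jump outcomes $(x,y)\mapsto(hx,hy)$ and $(x,y)\mapsto(hx,y)$. Hence the coupled process $(X_t,Y_t)$ never exits $D_h$, where the exact identity $\overline L\phi=-(1+h)|x-y|^2$ applies along the whole trajectory, and the Markov property then yields \eqref{eq:gronwall}. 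Without this observation your computation only gives the weaker bound $\overline L\phi\leq -(1-h)|x-y|^2$ (valid everywhere, since $x+y\geq x-y$ and $1+h\geq 1-h$), leading to $1-h$ in place of $1+h$ in \eqref{eq:unsurt}. Your Dynkin-formula justification and the Jensen/Gr\"onwall step, including the remark that $u$ stays at zero once the components coalesce, are fine and match the paper.
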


\subsubsection*{Open questions and further remarks} 

The inequality \eqref{eq:gronwall} should provide a better bound than \eqref{eq:unsurt}. 
As pointed out in Lemma \ref{le:youpee}, one can actually expect an exponential rate,
but this remains an open problem. One may also ask for a version involving $W_p$ for any $p\geq1$
or even the total variation distance. 

Beyond the TCP window size dynamics, one
may ask about the speed of convergence of ergodic PDMPs, for which necessary
and sufficient ergodicity criteria are already known, see e.g.
\cite{MR2385873}. One may also study the long time behavior of interacting
processes associated to \eqref{eq:G1} or \eqref{eq:G2}, for instance Mac
Kean-Vlasov mean field interactions as in \cite{graham-robert}.

\section{Embedded chain}\label{se:embedded}

It is shown in \cite[Proposition 8]{DGR}, by Laplace transform inversion, that
if $H$ is a Dirac mass at point $\delta\in (0,1)$, the invariant measure of
the embedded chain $\nu=\nu_\delta$ has Lebesgue density
\begin{equation}\label{eq:nudelta}
  x\geq0\mapsto \frac{1}{\prod_{n=1}^\infty(1-\delta^{2n})} %
  \sum_{n=1}^\infty %
  \frac{(-1)^{n-1}\delta^{-2n}}{\prod_{k=1}^{n-1}|1-\delta^{-2k}|} %
  xe^{-\delta^{-2n}x^2/2}.
\end{equation}
It is unimodal, of order $O(x\exp(-\delta^2x^2/2))$ when $x\to\infty$, and all
its derivatives vanish at $x=0$. 

If $H$ is not a Dirac mass, the invariant measure $\nu$ of the embedded Markov
chain is no longer explicit. Nevertheless, the recursion formula \eqref{eq:AR}
provides the following result, see \cite{MR1387886,MR1797309}, which establish
the existence of an invariant measure with sub-Gaussian tails.

\begin{thm}[Convergence of the embedded chain, \cite{MR1387886,MR1797309}]\label{th:cvemb}
  Given any $\hat X_0$, the embedded Markov chain $\hat X=(\hat X_n)_{n\geq0}$
  associated to the dynamics \eqref{eq:G1} converges in distribution to the
  law of the random variable
  $$
  \PAR{2\sum_{n=1}^\infty Q_1^2\cdots Q_n^2E_n}^{1/2}
  $$
  which is a.s. finite, where $E_1,E_2,\ldots$ and $Q_1,Q_2,\ldots$
  are independent sequences of i.i.d.\ random variables following
  respectively the exponential law of unit mean and the law $H$ which
  appear in \eqref{eq:G1}.  In particular, $\nu$ is the unique
  invariant law of $\hat X$ and
  $$
  \int e^{sx^2}\nu(dx)
  =\dE\PAR{\frac{1}{\prod_{n=1}^\infty(1-2sQ_1^2\cdots Q_n^2)}},
  $$
  which is finite if $2sq^2<1$ and infinite if $2sq^2>1$, where
  $q=\inf\BRA{x,\ \dP(Q>x)=1}\leq 1$.
\end{thm}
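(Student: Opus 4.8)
\textbf{Proof strategy for Theorem \ref{th:cvemb}.}

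The plan is to iterate the recursion \eqref{eq:AR} to obtain an explicit representation of $\hat X_n^2$ in terms of the driving sequences, then recognize that this representation converges almost surely to a well-defined random variable, and finally identify that limit's law as the unique invariant measure $\nu$. Concretely, unrolling \eqref{eq:AR} from time $n$ back to time $0$ gives
\begin{equation*}
  \hat X_n^2 = Q_1^2\cdots Q_n^2\,\hat X_0^2 + 2\sum_{k=1}^n Q_k^2 Q_{k+1}^2\cdots Q_n^2\, E_k.
\end{equation*}
The first term tends to $0$ a.s.\ since $Q_j\in[0,1)$ a.s.\ (indeed $\prod_{j=1}^n Q_j^2\to 0$ a.s., as $\log\prod Q_j^2$ is a sum of i.i.d.\ terms with negative — possibly $-\infty$ — mean). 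For the second term, the key observation, already exploited in \cite{DGR}, is a \emph{time reversal}: since the $(Q_k,E_k)$ are i.i.d., the finite sum $\sum_{k=1}^n Q_k^2\cdots Q_n^2 E_k$ has the same law as $\sum_{k=1}^n Q_1^2\cdots Q_k^2 E_k$. The latter is a sum of nonnegative terms, monotone nondecreasing in $n$, and its expectation is $\sum_{k=1}^n \dE(Q^2)^k E(E_k) = \sum_{k=1}^n \dE(Q^2)^k$, which is bounded by the convergent geometric series $\sum_{k\geq1}\dE(Q^2)^k < \infty$ because $\dE(Q^2)\leq q^2<1$ (using $q\leq 1$ and $\dP(Q<1)>0$, or more carefully $\dE(Q^2)<1$ whenever $H\neq\delta_1$, which holds since $H$ is supported in $[0,1)$). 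Hence by monotone convergence the limit $S:=2\sum_{k=1}^\infty Q_1^2\cdots Q_k^2 E_k$ is a.s.\ finite with finite mean, and $\hat X_n^2 \xrightarrow{d} S$, i.e.\ $\hat X_n \xrightarrow{d} S^{1/2}$.

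Next I would show that $\nu:=\cL(S^{1/2})$ is invariant for the kernel $K$ of \eqref{eq:K}. This follows by passing to the limit in the distributional identity: if $\hat X_0\sim\nu$ then $\hat X_1\sim\nu K$, but the representation above shows $\cL(\hat X_n)\to\nu$ for \emph{every} fixed initial law (the initial term washes out), so in particular starting from $\nu$ we get $\nu = \nu K^n \to \nu$, forcing $\nu K = \nu$. Uniqueness is immediate from the same convergence statement: any invariant $\nu'$ satisfies $\nu' = \nu' K^n \to \nu$, so $\nu'=\nu$. Finally, for the Laplace-type transform, condition on the sequence $(Q_k)_{k\geq1}$: given the $Q$'s, $S = 2\sum_k (Q_1^2\cdots Q_k^2) E_k$ is a sum of independent exponentials with parameters $\big(2 Q_1^2\cdots Q_k^2\big)^{-1}$, so $\dE[e^{sS}\mid (Q_k)] = \prod_{k=1}^\infty \big(1 - 2s\,Q_1^2\cdots Q_k^2\big)^{-1}$ whenever $2s\, Q_1^2\cdots Q_k^2 < 1$ for all $k$ — which, since the products decrease, amounts to $2s Q_1^2<1$. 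Taking expectation over the $Q$'s gives the stated formula; the finiteness/divergence dichotomy according to whether $2sq^2<1$ or $2sq^2>1$ comes from analyzing when $2s Q_1^2<1$ holds a.s.\ versus with positive probability, together with the elementary bound $-\log(1-u)\leq u/(1-u)$ to control the infinite product's tail.

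The main obstacle is the almost sure convergence of the series and the legitimacy of the time-reversal swap: one must be a little careful that the identity in law between $\sum_{k=1}^n Q_k^2\cdots Q_n^2 E_k$ and $\sum_{k=1}^n Q_1^2\cdots Q_k^2 E_k$ is only \emph{in law for each fixed $n$} (the processes are not pathwise equal), so the a.s.\ limit exists for the reversed sum and one transfers only the distributional limit back to $\hat X_n^2$ — which is exactly what is needed since the theorem asserts convergence in distribution. A secondary technical point is justifying the interchange of expectation and infinite product for the Laplace transform on the region where it is finite, which is handled by monotone convergence on $\{2sq^2<1\}$ after noting all factors $(1-2s Q_1^2\cdots Q_k^2)^{-1}\geq 1$.
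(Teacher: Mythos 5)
The paper does not actually prove Theorem \ref{th:cvemb}: it is quoted from Goldie--Gr\"ubel and Goldie--Maller \cite{MR1387886,MR1797309}, so there is no internal proof to compare against. Your proposal reconstructs the standard perpetuity argument from those references, and its core is sound: the unrolled recursion, the time-reversal identity in law for each fixed $n$, monotone convergence of the reversed partial sums, and Slutsky's lemma to absorb the vanishing initial term $\Pi_n\hat X_0^2$ all go through, as does the conditional computation of $\dE\SBRA{e^{sS}\mid (Q_k)}$ as an infinite product of exponential moment generating functions. One small repair: your derivation of invariance (``$\nu=\nu K^n\to\nu$, forcing $\nu K=\nu$'') is circular as written; the clean fix is either to use the Feller property of $K$ (so $\nu K^{n}\to\nu$ gives $\nu K^{n+1}\to\nu K$, whence $\nu K=\nu$), or, more directly, to read off the distributional fixed point $S\overset{d}{=}Q_1^2(2E_1+S')$ from the series itself, which proves invariance of $\cL(\sqrt S)$ in one line.

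The genuine gap is in the finiteness of $\int e^{sx^2}\,\nu(dx)$ when $2sq^2<1$. Writing $\Pi_k=Q_1^2\cdots Q_k^2\leq q^{2k}$, your bound $-\log(1-u)\leq u/(1-u)$ gives $\prod_k(1-2s\Pi_k)^{-1}\leq\exp\PAR{\tfrac{2s}{1-2sq^2}\sum_k\Pi_k}$, and the right-hand side is controlled by a deterministic constant only because $\sum_k q^{2k}<\infty$, i.e.\ only when $q<1$. But $H$ supported in $[0,1)$ does not force $q<1$ (take $H$ uniform on $[0,1)$); when $q=1$ you are left needing $\dE\exp\PAR{\lambda\sum_k\Pi_k}<\infty$ for $\lambda=2s/(1-2s)$, that is, exponential integrability of the perpetuity $\sum_k\Pi_k$ --- which is precisely the nontrivial ``thin tails'' estimate that constitutes the main content of \cite{MR1387886} and cannot be dispatched by the elementary inequality alone. (Your divergence argument for $2sq^2>1$ --- positive probability that $2sQ_1^2>1$, hence an infinite conditional expectation on an event of positive probability --- is correct.) A final cosmetic remark: the paper's definition $q=\inf\BRA{x,\ \dP(Q>x)=1}$ is evidently a typo for the essential supremum $\inf\BRA{x,\ \dP(Q>x)=0}$, and you have rightly read it that way.
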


Let us now turn to our quantitative estimate for the convergence to
equilibrium for the embedded chain.

\begin{proof}[Proof of Theorem \ref{th:convK}]
  It is sufficient to provide a good coupling. Let $x\geq0$ and
  $y\geq0$ be two non-negative real numbers, and let ${(E_n)}_{n\geq
    1}$ and ${(Q_n)}_{n\geq 1}$ be two independent sequences of
  i.i.d.\ random variables with respective laws the exponential law of
  unit mean and the law $H$ which appears in \eqref{eq:G1}. Let $\hat
  X$ and $\hat Y$ be the discrete time Markov chains on $[0,\infty)$
  defined by
  \begin{align*}
    \hat X_0&=x\quad\text{and}\quad %
    \hat X_{n+1}=Q_{n+1}\sqrt{\hat X_n^2+2E_{n+1}}%
    \quad\text{for any $n\geq0$}\\
    \hat Y_0&=y\quad\text{and}\quad %
    \hat Y_{n+1}=Q_{n+1}\sqrt{\hat Y_n^2+2E_{n+1}}%
    \quad\text{for any $n\geq0$}.
  \end{align*}
  By the analogue of \eqref{eq:Tn} for \eqref{eq:G2}, the law of $\hat X$
  (respectively $\hat Y$) is the law of the embedded chain of a process
  generated by \eqref{eq:G1} and starting from $x$ (respectively $y$). Now,
  for any $p\geq 1$, since $x\mapsto\sqrt{x^2+a}$ is a 1-Lipschitz function on
  $[0,\infty)$ for any $a\geq 0$, we get
  \begin{align*}
    \dE\PAR{\ABS{\hat X_{n+1}-\hat Y_{n+1}}^p}&=
    \dE\PAR{Q_{n+1}^p\ABS{\sqrt{\hat X_n^2+2E_{n+1}}-\sqrt{\hat Y_n^2+2E_{n+1}}}^p}\\
    &\leq \dE\PAR{Q_{n+1}^p\ABS{\hat X_n-\hat Y_n}^p}
    =\dE\PAR{Q_{n+1}^p}\dE\PAR{\ABS{\hat X_n-\hat Y_n}^p}
  \end{align*}
  A straightforward recurrence leads to
  $$
  \dE\PAR{\ABS{\hat X_{n}-\hat Y_{n}}^p}%
  \leq \dE\PAR{Q_1^p}^n\ABS{x-y}^p.
  $$
  This gives the desired inequality when the initial laws are Dirac masses.
  The general case follows by integrating this inequality with respect to
  couplings of the initial laws.
\end{proof}

Let us now investigate some properties of the kernel $K$ defined by
\eqref{eq:K} that will be used to provide concentration bounds for the ergodic
theorem. The key point is that $K^n$ and $\nu$ satisfy a Gross (or logarithmic
Sobolev) inequality.

\begin{defi}[Gross inequality]
  A law $\eta$ on $\dR^d$ satisfies a Gross (or logarithmic Sobolev
  \cite{MR1845806,MR0420249}) inequality with constant $c>0$ when for any
  smooth compactly supported $f:\dR^d\to\dR$,
  $$
  \int\!f^2\log(f^2)\,d\eta-\int\!f^2\,d\eta\log\!\int\!f^2\,d\eta %
  \leq c\,\!\int\!|\nabla f|^2\,d\eta.
  $$
  We denote by $\textsc{Gross}(\eta)\in(0,\infty]$ the smallest
  constant for which this holds true.
\end{defi}

If $F\cdot\eta$ is the image of $\eta$ by $F$ then
$\textsc{Gross}(F\cdot\eta)\leq\textsc{Gross}(\eta)\NRM{F}^2_\textsc{Lip}$.
The Gross inequality contains an information on Gaussian concentration
of measure: the function $x\mapsto e^{ax^2}$ is $\eta$-integrable as
soon as
$a<1/\textsc{Gross}(\eta)$. 
Moreover, if $\eta$ has covariance $\Sigma$ with spectral radius
$\rho(\Sigma)$ then $2\rho(\Sigma)\leq\textsc{Gross}(\eta)$ and
equality is achieved when $\eta$ is Gaussian. Furthermore, for any
$\alpha$-Lipschitz function $f:\dR\to\dR$ and any $\lambda>0$,
\begin{equation}\label{eq:lapgross}
  \dE_\eta\PAR{e^{\lambda f}}\leq e^{C\alpha^2\lambda^2/4} e^{\lambda\dE_\eta f}
\end{equation}
as soon as $C\geq\textsc{Gross}(\eta)$. This means that $\eta$ satisfies a
\emph{sub-Gaussian concentration of measure} for Lipschitz functions. For more
details, see e.g. \cite{MR1849347,MR1964483} and references therein.

\begin{thm}[Properties of the kernel of the embedded chain]\label{th:grossK}
  Let $\hat X$ be the embedded chain associated to \eqref{eq:G1} with
  transition kernel \eqref{eq:K}. Assume that $H$ is the Dirac mass at point
  $\delta\in[0,1)$. If $f$ is a 1-Lipschitz function from $[0,+\infty)$ to
  $\dR$, then $x\mapsto K(f)(x)$ is a $\delta$-Lipschitz function from
  $[0,+\infty)$ to $\dR$. Moreover, for any $x\geq 0$, the law $K(\cdot)(x)$
  satisfies a Gross inequality with constant $2\delta^2$.
\end{thm}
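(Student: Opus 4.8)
The plan is to realize the law $K(\cdot)(x)$ as the image of a standard Gaussian measure under an explicit Lipschitz map, and then invoke the contraction property of the Gross constant stated above.

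When $H=\delta_\delta$, formula \eqref{eq:K} reads $K(f)(x)=\dE[f(\delta\sqrt{x^2+2E})]$ with $E$ an exponential random variable of unit mean. For the first assertion, note that for each fixed value of $E$ the map $x\mapsto\delta\sqrt{x^2+2E}$ has derivative $\delta x/\sqrt{x^2+2E}$, whose absolute value is at most $\delta$ since $x\le\sqrt{x^2+2E}$. Hence $x\mapsto\delta\sqrt{x^2+2E}$ is $\delta$-Lipschitz on $[0,\infty)$ for every $E\ge0$; composing with the $1$-Lipschitz function $f$ and taking expectation shows that $x\mapsto K(f)(x)$ is $\delta$-Lipschitz.

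For the Gross inequality, the key observation is that $\sqrt{2E}$ has the Rayleigh distribution: indeed $\dP(\sqrt{2E}>t)=\dP(E>t^2/2)=e^{-t^2/2}$ for $t\ge0$, which is exactly the law of $\sqrt{Z_1^2+Z_2^2}$ when $(Z_1,Z_2)$ is a standard Gaussian vector on $\dR^2$ (Box--Muller). Consequently, for fixed $x\ge0$, the probability measure $K(\cdot)(x)$ is the image of the standard Gaussian $\gamma_2$ on $\dR^2$ under the map $F_x:z\mapsto\delta\sqrt{x^2+|z|^2}$. This map is $\delta$-Lipschitz: its gradient is $\delta z/\sqrt{x^2+|z|^2}$, of Euclidean norm $\delta|z|/\sqrt{x^2+|z|^2}\le\delta$. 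Since $\textsc{Gross}(\gamma_2)=2$ (tensorization of the one-dimensional Gaussian log-Sobolev inequality), the contraction property $\textsc{Gross}(F\cdot\eta)\le\textsc{Gross}(\eta)\NRM{F}^2_\textsc{Lip}$ recalled before the statement yields $\textsc{Gross}(K(\cdot)(x))\le 2\delta^2$, as claimed. The degenerate case $\delta=0$ is trivial, both $K(f)$ being constant and $K(\cdot)(x)$ being a Dirac mass.

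I expect the only real point requiring care to be the identification of $\sqrt{2E}$ with the norm of a two-dimensional Gaussian vector; one should stress that working in dimension two (rather than trying to use a one-dimensional Gaussian, or the exponential law directly, which does \emph{not} satisfy a log-Sobolev inequality) is what makes the argument go through. Everything else reduces to the elementary gradient bounds above together with the quoted properties of the Gross constant.
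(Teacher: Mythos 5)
Your proof is correct and rests on the same core idea as the paper's --- realizing the kernel law $K(\cdot)(x)$ as the image of the two-dimensional standard Gaussian under a $\delta$-Lipschitz map, then invoking $\textsc{Gross}(\gamma_2)=2$ and contraction of the Gross constant under Lipschitz push-forward. The difference is purely one of execution, and yours is the cleaner of the two. The paper reaches the same conclusion by a three-step factorization: it first pushes $\gamma_2$ forward by the $(1/2)$-Lipschitz map $z\mapsto\frac12|z|$ to the $\chi$-law $\eta=\cL(\sqrt{E/2})$, then represents the restriction of $\eta$ to $(x/2,\infty)$ as the image of $\eta$ by the $1$-Lipschitz map $t\mapsto G^{-1}(G(x/2)+(1-G(x/2))G(t))$, and finally scales by the $2\delta$-Lipschitz map $v\mapsto 2\delta v$; the Lipschitz constants multiply to $\delta$ as in your version. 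You collapse these three steps into the single map $F_x:z\mapsto\delta\sqrt{x^2+|z|^2}$, verified to have gradient norm $\leq\delta$, which avoids both the auxiliary variable $\sqrt{E/2}$ and the conditioning identity --- the most opaque step in the published argument. Your pathwise derivative bound for the $\delta$-Lipschitz claim on $K(f)$ is also equivalent to the paper's commutation inequality $|(Kf)'|\leq\delta K(|f'|)$, just stated before integration rather than after. The closing remark, that one must pass through dimension two because the exponential law itself has no log-Sobolev inequality, is exactly the right thing to stress.
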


\begin{proof}
  If $\delta=0$, then $K$ is the Dirac mass at 0 and the result is
  trivial. For any smooth function $f:[0,\infty)\to\dR$, we have from
  \eqref{eq:K} that
  \begin{equation}\label{eq:comK}
    \ABS{(Kf)'}%
    =\delta\ABS{K\PAR{\frac{x}{\sqrt{x^2+2
            E}}f'\PAR{\delta\sqrt{x^2+2E}}}}%
    \leq \delta K(\ABS{f'}).
  \end{equation}
  Let us show now that for every $x\geq 0$ the law
  $K(x,\cdot)=\cL(\hat X_1\,\vert\,\hat X_0=x)$ satisfies a Gross
  inequality with constant $2\delta^2 $. Since $E$ is exponential of
  mean $1$, the law $\eta$ of $\sqrt{E/2}$ is a $\chi$-distribution
  with probability density and cumulative distribution functions given
  by
  $$
  g:v\mapsto 4ve^{-2v^2}\mathds{1}_\BRA{v>0} \quad\text{and}\quad %
  G:v\mapsto (1-e^{-2v^2})\mathds{1}_\BRA{v>0}.
  $$
  On the other hand, $2E\overset{d}{=}U_1^2+U_2^2$ where $U_1,U_2$ are i.i.d.\
  standard Gaussians, and thus
  $$
  \sqrt{E/2}%
  =\frac{1}{2}\sqrt{2E}%
  \overset{d}{=}\frac{1}{2}\sqrt{U_1^2+U_2^2}.
  $$
  Also, $\eta$ is the image of the Gaussian law $\cN(0,I_2)$ on $\dR^2$ by a
  $(1/2)$-Lipschitz function, and this implies that $\eta$ satisfies a Gross
  inequality with constant $1/2$. Moreover,
  \begin{align*}
  K(f)(x)%
  &=\int_0^\infty\!f\PAR{\delta\sqrt{x^2+2u}}e^{-u}\,du\\
  &=\int f(2\delta v) \frac{4ve^{-2v^2}}{e^{-x^2/2}}\mathds{1}_\BRA{v>x/2}\,dv\\
  &=\int f(2\delta v) \frac{g(v)}{1-G(x/2)}\mathds{1}_\BRA{v>x/2}\,dv.
  \end{align*}
  Thus, $K(\cdot)(x)$ is just the image law by the Lipschitz map $v\mapsto
  2\delta v$ of the law $\eta$ conditioned on $(x/2,+\infty)$. This
  conditional law is in turn the image of $\eta$ by the function
  $$
  t\mapsto G^{-1}(G(x)+(1-G(x))G(t))=G^{-1}(1-\exp(-t^2-x^2))
  =\sqrt{x^2+t^2}.
  $$
  This function is 1-Lipschitz for any $x\geq 0$. Consequently, by using twice
  the stability of Gross inequalities by Lipschitz maps, we obtain that for
  every $x>0$, the law $K(x,\cdot)$ satisfies a Gross inequality with constant
  $(2\delta)^2/2=2\delta^2$.
\end{proof}

\begin{rem}
  When $\delta=0$, the embedded chain is the constant Markov chain equal to 0.
  Moreover, the chain ${(Z_n)}_{n\geq 0}$ defined by $Z_n=X_{T_n^-}$ is also
  quite simple to study. Indeed, the random variables ${(Z_n)}_{n\geq 1}$ are
  i.i.d.\ and have the law $\nu$ of $\sqrt{2E}$. The previous proof ensures
  that $\nu$ satisfies a Gross inequality with constant 2. One of the most
  useful properties of Gross inequality is the tensorization property:
  $\mathrm{Gross}(\eta^{\otimes n})\leq \mathrm{Gross}(\eta)$ for every
  $n\geq1$, see e.g. \cite[Chapter 1]{MR1845806}. Using now the concentration
  property, one has, for any 1-Lipschitz function and any $u\geq 0$,
  $$
  \dP\PAR{\ABS{\frac{1}{n}\sum_{k=1}^nf(Z_k)-\int\!f\,d\nu}\geq u}%
  \leq 2\exp\PAR{-\frac{Nu^2}{2}}.
  $$
\end{rem}

In the more general case where $\delta$ is positive, ${(\hat X_n)}_{n\geq 1}$
is no longer i.i.d. Nevertheless, the Gross inequality holds true for the
iterated kernels and for the invariant law $\nu$:

\begin{cor}[Gross inequality for the embedded chain and its invariant
  law $\nu$]\label{cor:lsiemb}
  Let $\hat X$ be the embedded chain associated to \eqref{eq:G1}. Assume that
  $H$ is the Dirac mass at point $\delta\in(0,1)$. For every $n\geq0$, let
  $K^n$ be the iterated transition kernel of $\hat X$, defined recursively for
  every bounded measurable function $f:[0,\infty)\to\dR$ by
  $$
  K^0(f)=f %
  \quad\text{and}\quad %
  K^{n+1}(f)=K(K^n(f))
  $$
  where $K$ is the kernel of $\hat X$ as in \eqref{eq:K}. Then for every
  integer $n\geq1$ and every real $x\geq0$, the iterated kernel $K^n(x,\cdot)$
  of $\hat X$ satisfies a Gross inequality and
  $$
  \textsc{Gross}(K^n(x,\cdot))\leq 2\delta^2\frac{1-\delta^{2n}}{1-\delta^2}.
  $$
  Also, the invariant law $\nu$ of $\hat X$ (see theorem \ref{th:cvemb})
  satisfies a Gross inequality and
  $$
  \textsc{Gross}(\nu)\leq 2\delta^2(1-\delta^2)^{-1}.
  $$
\end{cor}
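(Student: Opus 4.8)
The plan is to prove the bound for $K^n(x,\cdot)$ by induction on $n$, using \emph{both} conclusions of Theorem~\ref{th:grossK} as the building blocks, and then to obtain the bound for $\nu$ by letting $n\to\infty$. Set $c_n=\sup_{x\geq0}\textsc{Gross}(K^n(x,\cdot))$, so that $c_1\leq2\delta^2$ by Theorem~\ref{th:grossK}. Fix a smooth compactly supported $h:[0,\infty)\to\dR$. Disintegrating the law of the pair $(\hat X_1,\hat X_n)$ conditionally on $\hat X_0=x$ as $K(x,dy)\,K^{n-1}(y,dz)$ and applying the standard additivity of entropy along this one-step splitting (see e.g.\ \cite{MR1845806}) gives
$$
\mathrm{Ent}_{K^n(x,\cdot)}(h^2)
=\int_0^\infty\!\mathrm{Ent}_{K^{n-1}(y,\cdot)}(h^2)\,K(x,dy)
+\mathrm{Ent}_{K(x,\cdot)}\PAR{y\mapsto K^{n-1}(h^2)(y)}.
$$

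For the first term I would use the induction hypothesis $\textsc{Gross}(K^{n-1}(y,\cdot))\leq c_{n-1}$ to bound $\mathrm{Ent}_{K^{n-1}(y,\cdot)}(h^2)\leq c_{n-1}K^{n-1}((h')^2)(y)$, and integrate against $K(x,dy)$ to get a contribution at most $c_{n-1}K^n((h')^2)(x)$. For the second term I would set $\phi:=\sqrt{K^{n-1}(h^2)}$ and apply the Gross inequality of $K(x,\cdot)$ (constant $2\delta^2$, Theorem~\ref{th:grossK}) to get $\mathrm{Ent}_{K(x,\cdot)}(\phi^2)\leq2\delta^2K((\phi')^2)(x)$. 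To control $\phi'$ one iterates the commutation estimate \eqref{eq:comK} into $\ABS{(K^mg)'}\leq\delta^mK^m(\ABS{g'})$, applies it with $g=h^2$ (so $\ABS{g'}=2\ABS{h}\,\ABS{h'}$), and uses the Cauchy--Schwarz bound $K^{n-1}(\ABS{h}\,\ABS{h'})\leq\phi\sqrt{K^{n-1}((h')^2)}$; since $2\phi\phi'=(K^{n-1}(h^2))'$ this yields $(\phi')^2\leq\delta^{2(n-1)}K^{n-1}((h')^2)$, hence the second term is at most $2\delta^{2n}K^n((h')^2)(x)$. Adding the two bounds gives $\mathrm{Ent}_{K^n(x,\cdot)}(h^2)\leq(c_{n-1}+2\delta^{2n})\int_0^\infty(h')^2\,dK^n(x,\cdot)$, i.e.\ $c_n\leq c_{n-1}+2\delta^{2n}$; summing the geometric series from $c_1\leq2\delta^2$ produces $c_n\leq2\sum_{k=1}^n\delta^{2k}=2\delta^2(1-\delta^{2n})/(1-\delta^2)$, which is the first claim.

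For $\nu$, note that $c_n\leq2\delta^2(1-\delta^2)^{-1}$ uniformly in $n$ while $K^n(x,\cdot)\to\nu$ weakly (Theorem~\ref{th:cvemb}, or Theorem~\ref{th:convK}). Since for smooth compactly supported $f$ the three integrands $f^2$, $f^2\log(f^2)$ and $(f')^2$ are bounded and continuous, both sides of the logarithmic Sobolev inequality for $K^n(x,\cdot)$ pass to the limit, giving $\textsc{Gross}(\nu)\leq2\delta^2(1-\delta^2)^{-1}$.

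The step I expect to be the main obstacle is the derivative bound for $\phi$: one has to legitimately differentiate $K^{n-1}(h^2)$ and divide through by $\phi$, which is harmless here because $K$ integrates against a smooth, everywhere-positive kernel, so each iterate $K^m(h^2)$ is smooth and strictly positive (a regularization $h^2\rightsquigarrow h^2+\varepsilon$ would remove any lingering worry), but it does require the iterated commutation inequality and Cauchy--Schwarz to be combined with care. As a sanity check, and as a possible shortcut, one may also observe that at each step $2E\overset{d}{=}\ABS{W}^2$ with $W\sim\cN(0,I_2)$, so $K^n(x,\cdot)$ is the image of $\cN(0,I_{2n})$ under $(w^{(1)},\dots,w^{(n)})\mapsto\PAR{\delta^{2n}x^2+\sum_{k=1}^n\delta^{2(n-k+1)}\ABS{w^{(k)}}^2}^{1/2}$, whose squared gradient is everywhere at most $\delta^2$; stability of Gross inequalities under Lipschitz maps then yields even the uniform bound $\textsc{Gross}(K^n(x,\cdot))\leq2\delta^2$, and the analogous series representation of $\nu$ (Theorem~\ref{th:cvemb}) gives $\textsc{Gross}(\nu)\leq2\delta^2$.
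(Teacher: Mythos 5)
Your main argument is correct and is essentially the paper's proof, reorganized as a one-step induction instead of a single telescopic sum: the paper writes
$E_n(f)=\sum_{i=1}^n\{K^i[K^{n-i}(f^2)\log K^{n-i}(f^2)]-K^{i-1}[K^{n-i+1}(f^2)\log K^{n-i+1}(f^2)]\}$
and bounds each summand via the Gross inequality of $K(x,\cdot)$; unrolling your induction $c_n\leq c_{n-1}+2\delta^{2n}$ produces exactly that sum. Your key derivative estimate $(\phi')^2\leq\delta^{2(n-1)}K^{n-1}((h')^2)$ with $\phi=\sqrt{K^{n-1}(h^2)}$, obtained from the iterated commutation $\ABS{(K^m g)'}\leq\delta^m K^m(\ABS{g'})$ plus the Cauchy--Schwarz inequality, is identical to the paper's $\ABS{\nabla g_{n-i}}^2\leq\delta^{2(n-i)}K^{n-i}(\ABS{\nabla f}^2)$, which the paper derives by a one-step Cauchy--Schwarz $(Kf)^2/Kg\leq K(f^2/g)$ followed by induction. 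The limiting argument for $\nu$ is the same. One small thing to be careful about when inducting on $c_n=\sup_x\textsc{Gross}(K^n(x,\cdot))$: the first term of the decomposition gives the $y$-dependent bound $\mathrm{Ent}_{K^{n-1}(y,\cdot)}(h^2)\leq\textsc{Gross}(K^{n-1}(y,\cdot))\,K^{n-1}((h')^2)(y)$, so you do need the $\sup$ over the starting point in the induction hypothesis, as you correctly set up.

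What is genuinely worth noting is your closing remark. Writing $\hat X_n^2=\delta^{2n}x^2+\sum_{k=1}^n\delta^{2(n-k+1)}\ABS{W^{(k)}}^2$ with $W^{(k)}\sim\cN(0,I_2)$ i.i.d.\ exhibits $K^n(x,\cdot)$ as the image of $\cN(0,I_{2n})$ under a map whose gradient squared is bounded by $\delta^2$ pointwise (it is a weighted average of the weights $\delta^{2(n-k+1)}\leq\delta^2$). By the same Lipschitz-image stability used in the proof of Theorem~\ref{th:grossK}, this yields the uniform bound $\textsc{Gross}(K^n(x,\cdot))\leq2\delta^2$, and the series representation in Theorem~\ref{th:cvemb} yields $\textsc{Gross}(\nu)\leq2\delta^2$. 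This strictly improves the paper's bound $2\delta^2(1-\delta^{2n})/(1-\delta^2)$ for all $n\geq2$, and is arguably the more natural extension of the paper's own argument for a single step; it also bypasses the semigroup decomposition entirely. That observation deserves to be foregrounded rather than left as a sanity check.
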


\begin{proof}
  Recall that for every $n\geq0$, $x\geq0$, and bounded measurable
  $f:[0,\infty)\to\dR$,
  $$
  \dE \PAR{f(\hat X_n)\,|\,\hat X_0=x} %
  =(K^nf)(x) %
  = \int_0^\infty\!f(y)\,K^n(x,dy) 
  $$
  To show that $K^n$ satisfies a Gross inequality, we use a semi-group
  decomposition technique borrowed from \cite{MR2208718}. For any $n\geq1$ and
  any smooth function $f:[0,\infty)\to\dR$, the quantity
  $$
  E_n(f):=K^n(f^2\log f^2) - K^n(f^2) \log K^n(f^2)
  $$
  is equal to the telescopic sum
  $$
  \sum_{i=1}^n \BRA{K^i\SBRA{K^{n-i}(f^2)\log K^{n-i}(f^2)}%
    - K^{i-1}\SBRA{K^{n-i+1}(f^2)\log K^{n-i+1}(f^2)}}.
  $$
  Since the measure $K(\cdot)(x)$ satisfies a Gross inequality of constant
  $2\delta^2$, we get, with $g_{n-i}=\sqrt{K^{n-i}(f^2)}$,
  $$
  E_n(f)=\sum_{i=1}^n K^{i-1} \SBRA{E_1(g_{n-i})}  %
  \leq 2\delta^2\sum_{i=1}^nK^i\PAR{|\nabla g_{n-i}|^2},
  $$
  Now, by using the commutation \eqref{eq:comK}, we obtain, for all $1\leq i
  \leq n$,
  $$
  |\nabla g_{n-i}|^2 %
  = \frac{ \ABS{\nabla K^{n-i}(f^2)}^2}{4 K^{n-i}(f^2)}%
  \leq \delta^2\frac{\PAR{K\ABS{\nabla K^{n-i-1}(f^2)}}^2}{4KK^{n-i-1}(f^2)}
  $$
  Next, the Cauchy--Schwarz inequality
  $$
  \frac{(Kf)^2}{K(g)} \leq K\PAR{\frac{f^2}{g}}
  $$
  gives
  $$
  \frac{\PAR{K\ABS{\nabla K^{n-i-1}(f^2)}}^2}{4KK^{n-i-1}(f^2)}
  \leq
  K\PAR{\frac{\ABS{\nabla K^{n-i-1}(f^2)}^2}{4K^{n-i-1}(f^2)}}
  =K\PAR{\ABS{\nabla g_{n-i-1}}^2}. 
  $$
  From these bounds, a straightforward induction gives
  $$
  \ABS{\nabla g_{n-i}}^2%
  \leq \delta^{2(n-i)}K^{n-i}\PAR{\ABS{\nabla f}^2}.
  $$
  Consequently, by putting all together, we have 
  \begin{align*}
    E_n(f^2)%
    \leq 2\delta^2\sum_{i=0}^{n-1} \delta^{2i} K^n\PAR{\ABS{\nabla f}^2} %
    = 2\delta^2\frac{1-\delta^{2n}}{1-\delta^2} K^n\PAR{\ABS{\nabla f}^2}.
  \end{align*} 
  This gives 
  $$
  \textsc{Gross}(K^n)\leq 2\delta^2(1-\delta^{2n})(1-\delta^2)^{-1}.
  $$
  Finally, from Theorem \ref{th:cvemb}, $K^n$ tends weakly to $\nu$ as $n$
  tends to infinity and thus
  $$
  \textsc{Gross}(\nu)\leq
  \limsup_{n\to\infty}\textsc{Gross}(K^n)\leq 2\delta^2(1-\delta^2)^{-1}.
  $$
\end{proof}

The Gross inequality for $K$ can also be used to derive 
Theorem \ref{thm:gdevemb}.

\begin{proof}[Proof of Theorem \ref{thm:gdevemb}]
  We shall establish that for any $u\geq 0$ and any 1-Lipschitz function
  $f:[0,\infty)\to\dR$,
  $$
  \dP\PAR{\frac{1}{n}\sum_{k=1}^nf(\hat X_k)-\int\!
      f\,d\nu\geq u+\frac{\delta}{1-\delta}W_1(\delta_x,\nu)}\leq
  \exp\PAR{-\frac{n(1-\delta^2)u^2}{2\delta^2}}
  $$
  and the desired result follows immediately from this bound used for $f$ and
  $-f$. For any 1-Lipschitz function $f$, any $r>0$ and $\lambda>0$, we have,
  $$
  \dP\PAR{\frac{1}{n}\sum_{k=1}^n f(\hat X_k)\geq r} \leq
  \dE\PAR{e^{\lambda\sum_{k=1}^nf(\hat X_k)}} e^{-nr\lambda}.
  $$
  Now the Markov property ensures that
  \begin{align*}
    \dE\PAR{e^{\lambda\sum_{k=1}^nf(\hat X_k)}}
    &=\dE\PAR{e^{\lambda\sum_{k=1}^{n-1}f(\hat X_k)}\dE\PAR{e^{\lambda f(\hat X_n)}|X_{n-1}}}\\
    &=\dE\PAR{e^{\lambda\sum_{k=1}^{n-1}f(\hat X_k)}K\PAR{e^{\lambda
          f}}(X_{n-1})}.
  \end{align*}
  From Theorem \ref{th:grossK}, the kernel $K(x,\cdot)$ of $\hat X$ satisfies
  a Gross inequality with constant $2\delta^2$ for every $x\geq0$. This
  inequality implies by \eqref{eq:lapgross} that for any $c$-Lipschitz
  function $g$,
  $$
  K\PAR{e^{\lambda g}}\leq \exp\PAR{\lambda Kg
    +\frac{c^2\delta^2\lambda^2}{2}}.
  $$
  Consequently, the Laplace transform of the ergodic mean can be
  bounded as follows:
  $$
  \dE\PAR{e^{\lambda\sum_{k=1}^nf(\hat X_k)}} \leq
  \exp\PAR{\frac{\delta^2\lambda^2}{2}}
  \dE\PAR{e^{\lambda\sum_{k=1}^{n-2}f(\hat X_k)} \dE\PAR{e^{\lambda
        (f+Kf)(\hat X_{n-1})}|\hat X_{n-2}}}.
  $$
  The commutation relation \eqref{eq:comK} ensures that $f+Kf$ is
  $(1+\delta)$-Lipschitz and then
  $$
  \dE\PAR{e^{\lambda (f+Kf)(\hat X_{n-1})}|\hat X_{n-2}}%
  \leq \exp\PAR{\frac{(1+\delta)^2\delta^2\lambda^2}{2}}%
  e^{\lambda(Kf+K^2f)(\hat X_{n-2})}.
  $$
  A straightforward recurrence ensures that
  $$
  \dE\PAR{e^{\lambda\sum_{k=1}^nf(\hat X_k)}} \leq
  \exp\PAR{\frac{n\delta^2\lambda^2}{2(1-\delta^2)}}e^{\sum_{k=1}^n
    K^kf(x)}.
  $$
  Choosing $r=(1/n)\sum_{k=1}^n K^kf(x)+u$ leads to
  $$
  \dP\PAR{\frac{1}{n}\sum_{k=1}^nf(\hat
    X_k)-\frac{1}{n}\sum_{k=1}^nK^kf\geq u} \leq
  \exp\PAR{\frac{n\delta^2\lambda^2}{2(1-\delta^2)}-n\lambda u}.
  $$
  The right hand side is minimum for $\lambda=u(\delta^{-2}-1)$. At this
  point, we recall the dual formulation of $W_1(\alpha,\beta)$ for every
  probability laws $\alpha$ and $\beta$:
  $$
  W_1(\alpha,\beta)=\sup_{\NRM{f}_{\text{Lip}}\leq 1}%
  \PAR{\int\! f\,d\alpha-\int\! f\,d\beta}%
  \quad\text{where}\quad%
  \NRM{f}_{\text{Lip}}=%
  \sup_{x\neq y}\frac{\ABS{f(x)-f(y)}}{\ABS{x-y}}.
  $$
  Therefore, by using Theorem \ref{th:convK}, one gets
  $$
  \frac{1}{n}\sum_{k=1}^nK^kf(x)-\int\! f\,d\nu \leq
  \frac{1}{n}\sum_{k=1}^nW_1(K^k(\cdot)(x),\nu) \leq
  \frac{\delta}{1-\delta}W_1(\delta_x,\nu).
  $$
\end{proof}

\begin{rem}
  A careful reading of the proof of Theorem \ref{thm:gdevemb} suggests that
  one may replace the initial law $\delta_x$ by a more general initial law
  provided that it satisfies a sub-Gaussian concentration for Lipschitz
  functions \eqref{eq:lapgross}.
\end{rem}

\section{Continuous time process}\label{se:coupling}

As an introduction of our coupling method to prove Theorem \ref{th:wun-gen},
let us consider the following simpler dynamics, studied recently in
\cite{LL,MR2426601}. The window size is modeled by a Markov process
$X=(X_t)_{t\geq0}$ that increases linearly with rate one. Congestion signals
arrive according to a Poisson process with constant rate $\lambda>0$, and upon
receipt of the $k^\text{th}$ signal, the window size is reduced by
multiplication with a random variable $Q_k$. We assume that ${(Q_k)}_{k\geq0}$
is a sequence of i.i.d.\ random variables of law $H$ with support in $[0,1)$.
In other words, the process $X$ is generated by
\begin{equation}\label{eq:G2}
  L(f)(x) = f'(x) + \lambda\!\!\int_0^1\!(f(h x)-f(x))\,H(dh)
\end{equation}
where $\lambda$ is this time a positive real number. In this framework, one
can compute explicitly the transient moments of $X_t$ (see
\cite{LL,MR2426601}): for every $n\geq0$, every $x\geq0$, and every $t\geq0$,
\begin{equation}\label{eq:momlop}
  \dE((X_t)^n\,\vert\,X_0=x)=\frac{n!}{\prod_{k=1}^n\theta_k}+%
  n!\sum_{m=1}^n\biggr(\sum_{k=0}^m\frac{x^k}{k!}%
  \prod_{\substack{j=k\\j\neq m}}^n\frac{1}{\theta_j-\theta_m}\biggr)e^{-\theta_mt}
\end{equation}
where for every real or integer $p\geq1$ the quantity $\theta_p$ is as in our
Theorem \ref{th:constant}. In contrast with the original dynamics
\eqref{eq:G1}, the jump rate is constant and thus the jumps occur at
Poissonian times. In this framework, we derive easily the following theorem,
which states an exponential ergodicity in all Wasserstein distances.

\begin{thm}[Wasserstein Exponential Ergodicity for constant jump rate]
   \label{th:constant}
   Let $X=(X_t)_{t\geq0}$ and $Y=(Y_t)_{t\geq0}$ be two processes generated by
   \eqref{eq:G2}. Assume that $\cL(X_0)$ and $\cL(Y_0)$ have finite
   $p^\text{th}$ moment for some real $p\geq1$. If one defines
   $\theta_p=\lambda(1-\dE(Q^p))$ with $Q\sim H$ then for every $t\geq0$,
   $$
   W_p(\cL(X_t),\cL(Y_t))\leq
   W_p(\cL(X_0),\cL(Y_0))\,e^{-p^{-1}\theta_pt}.
   $$   
\end{thm}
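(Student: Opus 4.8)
\emph{Proof plan.} The plan is to construct a \emph{synchronous coupling} that exploits the fact that the jump rate $\lambda$ in \eqref{eq:G2} does not depend on the position, and then to compute the law of $\ABS{X_t-Y_t}$ explicitly. Let $(N_t)_{t\geq0}$ be a Poisson process of intensity $\lambda$, let $(Q_k)_{k\geq1}$ be an i.i.d.\ sequence of law $H$, and let $(X_0,Y_0)$ be a random pair, the three objects being mutually independent. Run both components with slope $1$ between the jump times of $N$, and at the $k$-th jump of $N$ multiply \emph{both} $X$ and $Y$ by the \emph{same} factor $Q_k$. The first step is to check that each marginal of the resulting process on $[0,\infty)^2$ is generated by \eqref{eq:G2}: this is immediate from the description of the dynamics, the independence assumptions, and the memorylessness of the Poisson process. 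Hence $\cL(X_t)$ and $\cL(Y_t)$ are the laws appearing in the statement, and $(X_t,Y_t)$ is an admissible coupling of them for every $t$.

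The second step is the pathwise identity
$$
\ABS{X_t-Y_t}=\ABS{X_0-Y_0}\prod_{k=1}^{N_t}Q_k,
$$
which holds because the difference $X_t-Y_t$ is constant on each interval between consecutive jumps (both coordinates move with slope $1$) and is multiplied by $Q_k\geq0$ at the $k$-th jump. Raising this to the power $p$, using the independence of $(X_0,Y_0)$ from $(N,(Q_k))$, and conditioning on $N_t$ (which is Poisson of parameter $\lambda t$), one gets by Tonelli
$$
\dE\PAR{\ABS{X_t-Y_t}^p}
=\dE\PAR{\ABS{X_0-Y_0}^p}\sum_{n\geq0}e^{-\lambda t}\frac{(\lambda t)^n}{n!}\dE(Q^p)^n
=\dE\PAR{\ABS{X_0-Y_0}^p}\,e^{-\lambda t(1-\dE(Q^p))},
$$
where the exponential series has been summed and $\theta_p=\lambda(1-\dE(Q^p))\geq0$ since $Q\in[0,1)$. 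Finite $p$-th moments of $\cL(X_0)$ and $\cL(Y_0)$ ensure $\dE(\ABS{X_0-Y_0}^p)<\infty$.

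The last step is to optimize over the initial coupling. Starting the construction from an optimal coupling of $\cL(X_0)$ and $\cL(Y_0)$ for $W_p$ — which exists on $\dR$, being the monotone (quantile) coupling — the right-hand side above becomes $W_p(\cL(X_0),\cL(Y_0))^p\,e^{-\theta_p t}$; since $(X_t,Y_t)$ is an admissible coupling of $\cL(X_t)$ and $\cL(Y_t)$, definition \eqref{eq:Wp} gives $W_p(\cL(X_t),\cL(Y_t))^p\leq\dE(\ABS{X_t-Y_t}^p)$, and taking $p$-th roots yields the claim. There is no genuine obstacle here; the only points requiring a little care are the verification that the synchronous coupling has the correct marginals and the exchange of expectation with the Poisson sum (justified by nonnegativity). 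One may bypass the existence of an optimal initial coupling by using $\varepsilon$-optimal ones and letting $\varepsilon\to0$.
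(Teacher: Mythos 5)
Your proof is correct and uses exactly the same synchronous coupling as the paper (Poisson jump times shared by both components, same multiplicative factor at each jump), leading to the same conditioning-on-$N_t$ computation and the same final optimization over initial couplings. The only cosmetic difference is that the paper works with deterministic initial points $(x,y)$ and integrates against $\Pi$ at the end, whereas you carry a random initial pair throughout; this changes nothing of substance.
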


We ignore if the exponential rate of convergence in Theorem \ref{th:constant}
is optimal. One may try to get an upper bound from the moments formula
\eqref{eq:momlop}.

\begin{proof}[Proof of Theorem \ref{th:constant}]
  Let $N=(N_t)_{t\geq 0}$ be a Poisson process with constant intensity
  $\lambda$ and $Q=(Q_k)_{k\geq 1}$ be i.i.d.\ random variables with
  law $H$, independent of $N$. For any $x,y \geq0$, let us consider
  the processes $X=(X_t)_{t\geq0}$ and $Y=(Y_t)_{t\geq0}$ starting
  respectively at $x$ and $y$ at time 0, that jump when $N$ does, with
  a multiplicative factor $Q_k$ for the $k^{th}$ jump, and increase
  linearly with slope one between these jumps. It is quite clear that
  these processes are generated by \eqref{eq:G2}. Moreover, between
  jumps, $\ABS{X_t-Y_t}$ remains constant and at the $k^{th}$ jump
  this quantity is multiplied by $Q_k$. Thus for every $t\geq 0$ and
  $p\geq0$,
  \begin{align*}
    \dE\PAR{\ABS{X_t-Y_t}^p}&=%
    \sum_{k=0}^\infty\dE\PAR{\ABS{X_t-Y_t}^p\mathds{1}_{\{N_t=k\}}}\\
    &=\ABS{x-y}^p\sum_{k=0}^\infty\dE(Q^p)^k\dP\PAR{N_t=k}\\
    &=\ABS{x-y}^pe^{-\lambda t (1-\dE(Q^p))}.
  \end{align*}
  As a consequence, if $X=(X_t)_{t\geq0}$ and $Y=(Y_t)_{t\geq0}$ are
  now two processes generated by \eqref{eq:G2} with a constant jump
  intensity $\lambda$ and arbitrary initial laws, we obtain that, for
  any coupling $\Pi$ of their initial law $\cL(X_0)$ and $\cL(Y_0)$,
  any $t\geq0$, and any $p\geq1$,
  $$
  W_p(\cL(X_t),\cL(Y_t))^p %
  \leq e^{-\theta_p t}\int_{[0,\infty)^2}\!\ABS{x-y}^p\,\Pi(d(x,y)).
  $$
  Taking the infimum over $\Pi$ concludes the proof.
\end{proof}

Let us now turn to the \emph{generalized} TCP window size process generated by the infinitesimal generator
\eqref{eq:Ga}. Consider a two dimensional process where both components are generated by
\eqref{eq:Ga}. Since the sample paths of both components have slope $1$
between jumps, the distance between them remains constant except at jump
times. If the components jump together with the same factor $Q$, then this
distance is also multiplied by $Q$. Thus, our strategy is to encourage simultaneous 
jumps: let us introduce the Markov process ${((X_t,Y_t))}_{t\geq 0}$ on $[0,\infty)^2$ defined by its infinitesimal generator
\begin{align*}
\overline Lf(x,y)=&\partial_1f(x,y)+\partial_2f(x,y)\\%
&+(x-y)\int_0^1\!(f(hx,y)-f(x,y))\,H(dh)\\
&+(y+a)\int_0^1\!(f(hx,hy)-f(x,y))\,H(dh)
\end{align*}
if $x\geq y$ (if $y<x$ one has to exchange the variables $x$ and $y$).
 
Choosing a test function $f$ of the form $f(x,y)=g(x)$ or $f(x,y)=g(y)$ shows that $X$ and $Y$ are both Markov processes with infinitesimal generator $L$. 

The dynamics of $(X,Y)$ is as follows: if $(X_0,Y_0)=(x,y)$ with for example $x\geq y$, then  
\begin{itemize}
\item the first jump time $T$ has density $t\mapsto (x+t)e^{-t^2/2-xt}\ind_{(0,+\infty)}(t)$, 
\item on the event $\BRA{T=t}$ we have $(X_s,Y_s)=(x+s,y+s)$ for $s<t$ and 
$$
(X_t,Y_t)=
\begin{cases}
\displaystyle{\PAR{\frac{x+t}{2},\frac{y+t}{2}}}& \text{with probability } \displaystyle{\frac{y+t+a}{x+t+a}},\\ 
&\\
\displaystyle{\PAR{\frac{x+t}{2},y+t}}& \text{with probability }  \displaystyle{\frac{x-y}{x+t+a}}. 
\end{cases}
$$
\end{itemize}

\subsection{The modified TCP process}

The first part of this section is dedicated to the proof of Theorem \ref{th:wun-gen}.

\begin{proof}[Proof of Theorem \ref{th:wun-gen}]
We have to study the function $\alpha$: $t\mapsto \dE_{(x,y)}\PAR{\ABS{X_t-Y_t}}$ where $(X,Y)$ evolves according to the generator $\overline L$. Assume that $x>y$, then 
\begin{align*}
\alpha_{(x,y)}'(0)=&(x-y)\int_0^1\!(\ABS{hx-y}-\ABS{x-y})\,H(dh)%
+(y+a)(x-y)\int_0^1\!(h-1)\,H(dh)\\
=&-(x-y)\int_0^1\!\ind_\BRA{hx>y}(1-h)(x+y+a)\,H(dh)\\
&-(x-y)\int_0^1\!\ind_\BRA{hx\leq y}\PAR{(1+h)(x-y)+(1-h)a}\,H(dh) \\
\leq & -a(x-y)\int_0^1\!(1-h)\,H(dh).
\end{align*}
The Markov property ensures that 
$$
\alpha_{(x,y)}'(t)\leq a\kappa_1\alpha_{(x,y)}(t),
$$
where $\kappa_1=1-\int_0^1\!h\,H(dh)$. This obviously implies that 
$$
\dE_{(x,y)}\PAR{\ABS{X_t-Y_t}}\leq \ABS{x-y}e^{-a\kappa_1t}.
$$
The end of the proof is straightforward. 
If $X=(X_t)_{t\geq0}$ and $Y=(Y_t)_{t\geq0}$ are two processes generated by
\eqref{eq:G1} and if $\Pi$ is a coupling of $\cL(X_0)$ and $\cL(Y_0)$, we
hava,for every $t\geq0$,
\begin{align*}
  W_1(\cL(X_t),\cL(Y_t))
  &\leq \int_{[0,\infty)^2}\!\dE\PAR{\ABS{X_t-Y_t}\,|\,X_0=x,Y_0=y}\,\Pi(dx,dy) \\
  &\leq  e^{-a\kappa_1 t}\int_{[0,\infty)^2}\!\ABS{x-y} \,\Pi(dx,dy).
\end{align*}
Taking the infimum over $\Pi$ provides the result.  
\end{proof}

Let us now turn to the proof of Theorem \ref{th:strong}.

\begin{proof}[Proof of theorem \ref{th:strong}]
  The function $f$ defined by $f(x)=x$ for every $x\geq0$ satisfies to
  $$
  Lf(x)=1-\kappa_1x(x+a) \leq 1-\kappa_1x^2
  $$
  where $\kappa_1=1-\int_0^1\!h\,H(dh)\in (0,1]$. 
  Now, for every $x\geq0$ and $t\geq0$,
  \begin{align*}
  \alpha_x(t) %
  &:=\dE(X_t\,\vert\,X_0=x) \\
  &=\alpha_x(0)+\int_0^t\!\alpha_x'(s)\,ds \\%
  &= x+\int_0^t\!\dE((Lf)(X_s)|X_0=x)\,ds \\%
  &\leq x+\int_0^t (1-\kappa_1\dE(X_s^2|X_0=x))\,ds.
  \end{align*}
  Also, since $-\kappa_1$ is negative, we obtain, by using Jensen's inequality,
  $$
  \alpha_x'(t) %
  =1- \kappa_1\dE(X_t^2|X_0=x)\leq 1-\kappa_1\alpha_x(t)^2.
  $$
  As a consequence, $\alpha_x\leq \beta_x$ where $\beta_x$ is the solution of
  the Riccati differential equation
  $$
  \begin{cases}
    \beta_x(0)=x,\\
    \beta'_x(t)=1-\kappa_1\beta_x(t)^2\text{ for $t>0$}
  \end{cases}
  $$
  Denoting $d=\sqrt{\kappa_1}$, one gets, for $x\geq 1/d$,   
  $$
  \beta_x(t)
  =\frac{1}{d}+\frac{2(x-1/d)e^{-2dt}}{(dx+1)-(dx-1)e^{-2dt}}%
  =\frac{1}{d}\frac{dx \cosh(dt)+\sinh(dt)}{dx\sinh(dt)+\cosh(dt)}%
  \leq \frac{1}{d\tanh(dt)},
   $$
  and therefore
  $$
  \sup_{x\geq 1/d}\alpha_x(t)\leq \frac{1}{d\tanh(dt)}.
  $$
  On the other hand, we have also $\sup_{x\leq 1/d}\alpha_x(t)\leq
  1/d$, and thus for every $t>0$,
  $$
  \sup_{x\geq0}\alpha_x(t)\leq
  \frac{1}{d\tanh(dt)}.
  $$
  Consider now two processes $(X_t)_{t\geq0}$ and $(Y_t)_{t\geq0}$
  generated by \eqref{eq:G1} with arbitrary initial laws. For any
  $s>0$, $\dE(|X_s-Y_s|)\leq 2\sup_x\alpha_x(s)$ and therefore the
  upper bound above gives
  $$
  W_1(\cL(X_s),\cL(Y_s)) \leq \frac{2}{d\tanh(ds)}.
  $$
  Together with Theorem \ref{th:wun-gen}, this gives the following uniform
  estimate, for every $t\geq s>0$,
  \begin{align*}
    W_1(\cL(X_t),\cL(Y_t))%
    &\leq  W_1(\cL(X_s),\cL(Y_s))e^{-a\kappa_1(t-s)}\\%
    &\leq \frac{2e^{a\kappa_1 s}}{d\tanh(ds)} e^{-a\kappa_1 t}.
  \end{align*}
\end{proof}

\subsection{The real TCP process}

We end by giving the proof of Theorem \ref{th:realtcp} and making some comments on it.

\begin{proof}[Proof of Theorem \ref{th:realtcp}]
 We start the proof as in Theorem \ref{th:wun-gen}:
 $$
\alpha_{(x,y)}'(0)=
\begin{cases}
-(1-h)(x+y)(x-y)&\text{if }hx>y,\\
-(1+h)(x-y)^2&\text{if }hx\leq y.
\end{cases}
$$
The first bound is better. Nevertheless, if $D_h$ is the set $\BRA{(x,y),\ hy\leq x\leq h^{-1}y}$, one has to notice that the process $(X,Y)$ cannot exit $D_h$. Then, thanks to Markov property, we get the following bound: 
$$
\frac{d}{dt}\dE\PAR{\ABS{X_t-Y_t}}\leq-(1+h)\dE\PAR{\ABS{X_t-Y_t}^2}.
$$
Jensen's inequality ensures that 
$$
\frac{d}{dt}\dE\PAR{\ABS{X_t-Y_t}}\leq-(1+h)\BRA{\dE\PAR{\ABS{X_t-Y_t}}}^2,
$$
and thus, for any $t\geq 0$,  
$$
\dE\PAR{\ABS{X_t-Y_t}}\leq \frac{\dE\PAR{\ABS{X_0-Y_0}}}{1+(1+h)\dE\PAR{\ABS{X_0-Y_0}}t}.
$$
\end{proof}

Figure \ref{fi:comp} suggests that the convergence rate given by Theorem \ref{th:realtcp} is far from being satisfactory. The non-optimality of the coupling is clear. However, even with such a coupling, one could expect an explicit exponential upper bound. Let us denote $D_t=\ABS{X_t-Y_t}$ where ${(X_t,Y_t)}$ is defined in Theorem \ref{th:realtcp} . We think that $\dE(D_t^2)$ is in fact of the order of $\dE(D_t)$ (instead of $\dE(D_t)^2$). Indeed, with a rate of order $D_t$ a nonsimultaneous jump occurs at time $t$ and then $D_t$ is again of order one. In the following lemma, we introduce a simple Markov chain which captures the essential feature the dynamics of $D_t$ (division by 2 with probability $1-O(D_t)$) and we show that the expected position at time $n$ goes to zeros exponentially fast as $n$ goes to infinity. Additionally the recursive equation \eqref{eq:formomtoy} plays the role of \eqref{eq:gronwall}.


\begin{figure}[htbp]%
  \begin{center}%
  \includegraphics[scale=0.6]{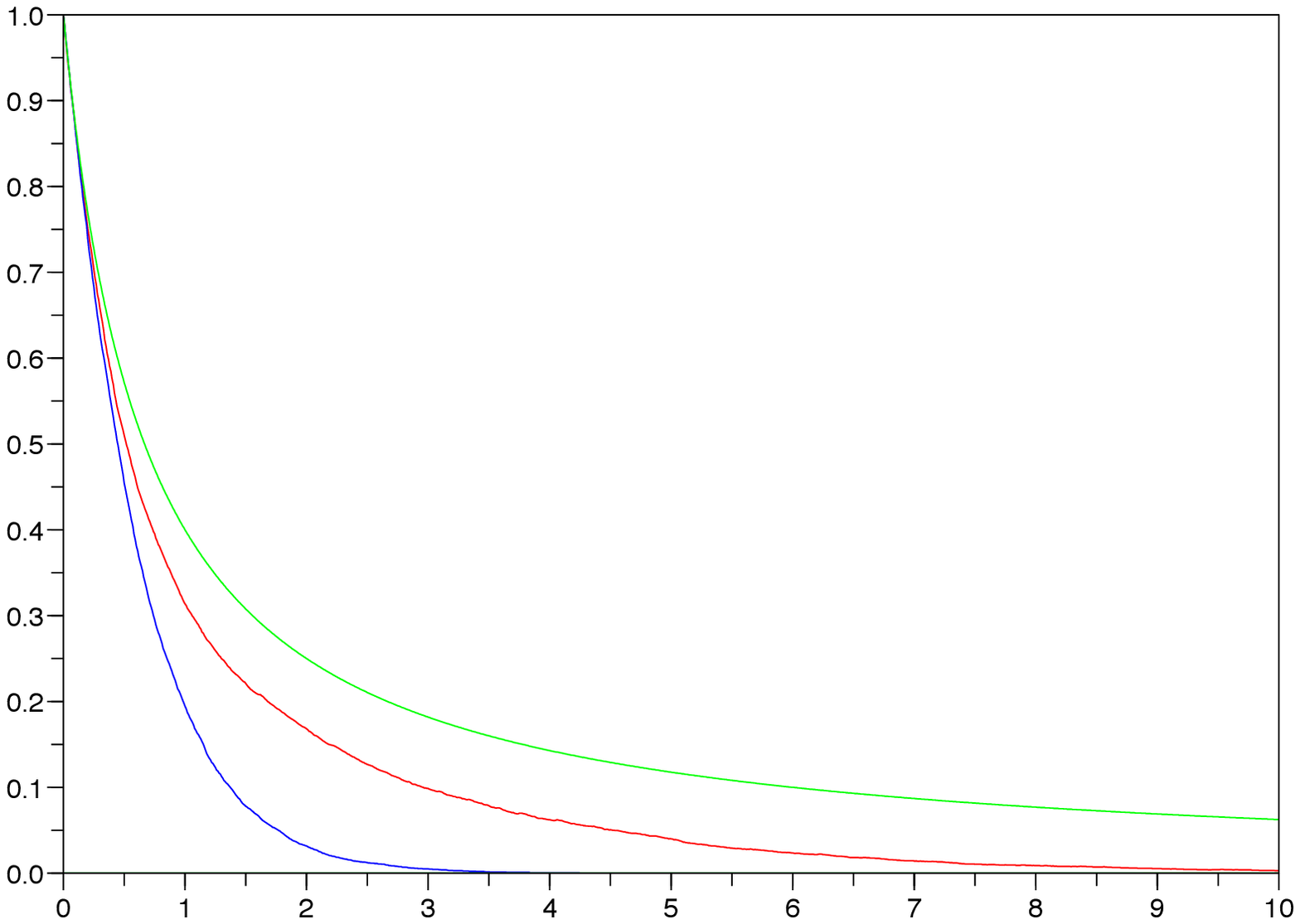}%
  \caption{{Here $(x,y)=(2,1)$ and $H=\delta_{1/2}$. This picture presents the three following functions of time: $t\mapsto W_1(\cL(X^x_t),\cL(X^y_t))$ where $X^u$ is driven by \eqref{eq:G1} with $X^u_0=u$ (blue curve),
$t\mapsto \dE\PAR{\ABS{X^{x,y}_t-Y^{x,y}_t}}$ where $(X^{x,y},Y^{x,y})$ is driven by \eqref{eq:gen2}  with $(X^{x,y}_0,Y^{x,y}_0)=(x,y)$ (red curve), $t\mapsto (x-y)/(1+1.5(x-y)t)$ the upper bound \eqref{eq:unsurt} of Theorem \ref{th:realtcp} (green curve). The first and second curves have been obtained by Monte-Carlo simulations. 
}}%
  \label{fi:comp}%
  \end{center}%
 \end{figure}

\begin{lem}\label{le:youpee}
Consider the homogeneous irreducible Markov chain $X={(X_n)}_{n\geq 0}$ with state space $E=\BRA{2^{-i},\ i\in\dN}$ such that, for any $n\geq 0$ and $x\in E$, on the event $\BRA{X_n=x}$
$$
X_{n+1}=
\begin{cases}
 1&\text{with probability }x/2,\\
  x/2&\text{with probability }1-x/2.
\end{cases}
$$
Denote by $\dE^1(X_n)$ the quantity $\dE(X_n\vert X_0=1)$. Then, for any $n\geq1$,
\begin{equation}\label{eq:formomtoy}
\dE^1\PAR{X_{n+1}}=\dE^1\PAR{X_{n}}-\frac{1}{4}\dE^1\PAR{X_{n}^2}
\end{equation}
and there exists a constant $c>0$ such that for any $n\geq 1$,
\begin{equation}\label{eq:expdectoy}
\dE^1(X_n)\leq \exp\PAR{-cn}.
\end{equation}
\end{lem}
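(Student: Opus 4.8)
The plan is to treat the two assertions separately: \eqref{eq:formomtoy} is a one-line computation, while \eqref{eq:expdectoy} needs a genuine drift argument. For \eqref{eq:formomtoy}, I would just read off from the transition law that on the event $\BRA{X_n=x}$,
$$
\dE(X_{n+1}\mid X_n=x)=1\cdot\frac x2+\frac x2\PAR{1-\frac x2}=x-\frac{x^2}{4},
$$
and then integrate this identity against the law of $X_n$ issued from $X_0=1$ to obtain $\dE^1(X_{n+1})=\dE^1(X_n)-\tfrac14\dE^1(X_n^2)$.

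For the exponential estimate \eqref{eq:expdectoy}, the first point to stress is that feeding \eqref{eq:formomtoy} into Jensen's inequality only yields $\dE^1(X_{n+1})\leq\dE^1(X_n)-\tfrac14\dE^1(X_n)^2$, hence a mere $O(1/n)$ rate --- exactly the loss one wants to avoid (this is the discrete analogue of passing from \eqref{eq:gronwall} to \eqref{eq:unsurt}). Instead I would run a geometric (Lyapunov-type) drift estimate with the \emph{concave} test function $V(x)=\sqrt x$. Its one-step action on $\BRA{X_n=x}$ is
$$
\dE\PAR{\sqrt{X_{n+1}}\mid X_n=x}=\frac x2+\sqrt{\frac x2}\,\PAR{1-\frac x2},
$$
so that, dividing by $\sqrt x$,
$$
\frac{\dE\PAR{\sqrt{X_{n+1}}\mid X_n=x}}{\sqrt x}
=\frac1{\sqrt2}+\frac12\PAR{\sqrt x-\frac{x}{\sqrt2}}.
$$
Maximizing $t\mapsto\sqrt t-t/\sqrt2$ over $t\geq0$ (critical point $t=\tfrac12$, value $\tfrac1{2\sqrt2}$) gives the uniform bound $\dE(\sqrt{X_{n+1}}\mid X_n=x)\leq\rho\,\sqrt x$ with $\rho:=\tfrac{5}{4\sqrt2}<1$, valid for all $x\in(0,1]$ and in particular for all $x\in E$. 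Iterating from $X_0=1$ gives $\dE^1(\sqrt{X_n})\leq\rho^{\,n}$, and since $X_n\leq1$ forces $X_n\leq\sqrt{X_n}$ one concludes $\dE^1(X_n)\leq\rho^{\,n}=e^{-cn}$ with $c:=\log(1/\rho)>0$.

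The only delicate point --- really the sole place where the specific structure of the chain enters --- will be verifying that the ratio displayed above stays strictly below $1$: the ``jump back to $1$'' event always contributes the constant $1/\sqrt2$, and one has to check that the additional term $\tfrac12(\sqrt x-x/\sqrt2)$, which encodes the competition between that rare jump and the systematic division by $2$, cannot bring the total up to $1$. That is precisely the elementary one-variable optimization above; everything else is bookkeeping. Any power $V(x)=x^{\theta}$ with $\theta\in(0,1)$ would do equally well --- $\theta=\tfrac12$ is chosen only for concreteness --- and one could optimize over $\theta$ to sharpen $c$, but the statement asks merely for some $c>0$.
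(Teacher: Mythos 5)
Your proof is correct, but it takes a genuinely different route from the paper's. Part one (the identity \eqref{eq:formomtoy}) you do exactly as the paper does. For the exponential decay \eqref{eq:expdectoy}, however, the paper never leaves the pair $\dE^1(X_n)$, $\dE^1(X_n^2)$: it computes
$\dE^1(X_n^2)=\tfrac12\dE^1(X_{n-1})+\tfrac14\dE^1\bigl(X_{n-1}^2(1-\tfrac12 X_{n-1})\bigr)\geq \tfrac12\dE^1(X_{n-1})\geq\tfrac12\dE^1(X_n)$ (the last step because $n\mapsto\dE^1(X_n)$ is decreasing by \eqref{eq:formomtoy}), plugs this back into \eqref{eq:formomtoy}, and obtains the linear contraction $\dE^1(X_{n+1})\leq\tfrac78\dE^1(X_n)$, hence $\dE^1(X_n)\leq\tfrac67(\tfrac78)^n$. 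This is deliberately in the spirit of the remark preceding the lemma: the whole point is to exhibit that $\dE(X_n^2)$ is of order $\dE(X_n)$, not $\dE(X_n)^2$, which is precisely the structural fact conjectured to hold for the real TCP distance $D_t=\ABS{X_t-Y_t}$. Your argument instead runs a concave Lyapunov drift with $V(x)=\sqrt x$, proving $\dE(\sqrt{X_{n+1}}\mid X_n=x)\leq \rho\sqrt x$ with $\rho=\tfrac{5}{4\sqrt2}\approx 0.884$, and then uses $X_n\leq\sqrt{X_n}$ on $E\subset(0,1]$. This is clean and self-contained, and nicely elastic (any $x^\theta$, $\theta\in(0,1)$, works), but it bypasses \eqref{eq:formomtoy} entirely in the second half and gives a marginally worse contraction rate ($\approx 0.884$ versus $7/8=0.875$). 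More to the point, it does not surface the mechanism $\dE(X_n^2)\gtrsim\dE(X_n)$ that the authors explicitly want the toy chain to illustrate, so while your proof is valid, it is less aligned with the narrative role the lemma plays in the paper.
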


\begin{proof}The Markov chain $X$ is transient (and converges to 0) since 
$$
p:=\dP(\forall n\geq 0,\ X_{n+1}= X_n/2\vert X_0=1)=\prod_{i=1}^\infty (1-2^{-i})>0.
$$
Since for any $n\geq 0$, 
$$
 \dE(X_{n+1}\vert X_{n})=\frac{1}{2} X_{n}+\frac{1}{2} X_{n} \PAR{1-\frac{1}{2} X_{n}},
$$
we get \eqref{eq:formomtoy}.
In particular, $\dE^1(X_1)=3/4$ and $n\mapsto \dE^1(X_n)$ is decreasing. Similarly, 
$$
\dE^1(X_n^2 )=\frac{1}{2}\dE^1(X_{n-1})+\frac{1}{4}\dE^1\PAR{X_{n-1}^2(1-\frac{1}{2}X_{n-1})}%
\geq \frac{1}{2} \dE^1(X_{n-1}) \geq \frac{1}{2} \dE^1(X_{n}).
$$ 
As a consequence, for any $n\geq 1$,  
 $$
 \dE^1\PAR{X_{n+1}}\leq \frac{7}{8}\dE^1\PAR{X_{n}}.
 $$
 and \eqref{eq:expdectoy} follows since
 for any $n\geq 1$,  
$$
\dE^1(X_n)\leq \frac{6}{7}\PAR{\frac{7}{8}}^n.
$$
\end{proof}

\bigskip

{\small \textbf{Acknowledgments.} The authors are grateful to the anonymous
  referees for their quick reports and fine comments which helped to improve
  the paper. DC and FM would like to thank Philippe Robert and his team for kind
  hospitality during their visits to INRIA Rocquencourt 
  in December 2007 and June 2009. }

{
\footnotesize
\providecommand{\bysame}{\leavevmode\hbox to3em{\hrulefill}\thinspace}
\providecommand{\MR}{\relax\ifhmode\unskip\space\fi MR }
\providecommand{\MRhref}[2]{%
  \href{http://www.ams.org/mathscinet-getitem?mr=#1}{#2}
}
\providecommand{\href}[2]{#2}

}

\bigskip

\vfill

{\footnotesize %
  \noindent Djalil \textsc{Chafa\"\i}, %
  \noindent\url{mailto:djalil(AT)chafai(DOT)net}
  
  \medskip
  \noindent\textsc{UMR 8050 CNRS Laboratoire d'Analyse et de Math\'ematiques Appliqu\'ees\\
    Universit\'e Paris-Est Marne-la-Vall\'ee \\ 
    5 bd Descartes, Champs-sur-Marne, F-77454 Cedex 2, France.}
  \bigskip
  
  \noindent Florent \textsc{Malrieu},  corresponding author, %
  \url{mailto:florent.malrieu(AT)univ-rennes1(DOT)fr}

  \medskip

  \noindent\textsc{UMR 6625 CNRS Institut de Recherche Math\'ematique de
    Rennes (IRMAR) \\ Universit\'e de Rennes I, Campus de Beaulieu, F-35042
    Rennes \textsc{Cedex}, France.}
  
 \bigskip
  
 \noindent Katy \textsc{Paroux}, %
 \noindent\url{mailto:katy.paroux(AT)univ-fcomte(DOT)fr}

 \medskip

 \noindent\textsc{UMR 6623 CNRS Laboratoire de Math\'ematiques \\
   Universit\'e de Franche-Comt\'e, F-25030 Besan\c con \textsc{Cedex}, France}

 \medskip

 \noindent Present adress   \noindent\textsc{UMR 6625 CNRS Institut de Recherche Math\'ematique de
    Rennes (IRMAR) \\ Universit\'e de Rennes I, Campus de Beaulieu, F-35042
    Rennes \textsc{Cedex}, France.}

\vfill

\begin{flushright}\texttt{Compiled \today.}\end{flushright}

}

\end{document}